\documentclass[a4paper,reqno,oneside,11pt]{amsart}
\usepackage{fullpage}

\usepackage{amsmath}
\usepackage{amsfonts}
\usepackage{amssymb}
\usepackage{verbatim,enumitem,graphicx}
\usepackage[colorlinks]{hyperref}
\usepackage{dsfont}
\usepackage{tikz}

\usepackage[utf8]{inputenc}

\usepackage{enumitem}
\usepackage{MnSymbol}

\setenumerate{leftmargin=*} 

\newtheorem{theorem}{Theorem}[section]
\newtheorem{lemma}[theorem]{Lemma}
\newtheorem{proposition}[theorem]{Proposition}
\newtheorem{corollary}[theorem]{Corollary}

\newtheorem{definition}[theorem]{Definition}

\newtheorem{claim}[theorem]{Claim}

\newcommand{\oldqed}{}
\def\endofClaim{\hfill\scalebox{.6}{$\Box$}}


\newcommand{\cC}{\mathcal{C}}

\newcommand{\cF}{\mathcal{F}}

\newcommand{\cK}{\mathcal{K}}
\newcommand{\cI}{\mathcal{I}}

\newcommand{\PP}{\mathbb{P}}
\newcommand{\EE}{\mathbb{E}}

\newcommand{\eps}{\varepsilon}

\renewcommand{\subset}{\subseteq}

\def\itm#1{\rm ({#1})}
\def\itmit#1{\itm{\it #1\,}}
\def\rom{\itmit{\roman{*}}}
\def\abc{\itmit{\alph{*}}}

\begin{document}
	
\title{Cycle factors in randomly perturbed graphs$^\ast$}

\author[J.~B\"{o}ttcher]{Julia B\"{o}ttcher$^\dag$}
\author[O.~Parczyk]{Olaf Parczyk$^{\dag,\S}$}
\author[A.~Sgueglia]{Amedeo Sgueglia$^\dag$}
\author[J.~Skokan]{Jozef Skokan$^\dag$$^\ddag$}

\thanks{$^\dag$ Department of Mathematics London School of Economics London, WC2A 2AE, UK \\
Email: \href{mailto:a.sgueglia@lse.ac.uk}{\{\texttt{j.boettcher|o.parczyk|a.sgueglia|j.skokan}\}\texttt{@lse.ac.uk}}}
\thanks{$^\ddag$ Department of Mathematics, University of Illinois at Urbana-Champaign,  Urbana, IL 61801, USA}
\thanks{$^\S$ OP was supported by the Deutsche Forschungsgemeinschaft (DFG, Grant PA 3513/1-1).}
\thanks{$^\ast$ An extended abstract of this work will appear in the proceedings of LAGOS $2021$.}

\begin{abstract} 
  We study the problem of finding pairwise vertex-disjoint copies of the
  $\ell$-vertex cycle $C_\ell$ in the randomly perturbed graph model, which is
  the union of a deterministic $n$-vertex graph $G$  and the
  binomial random graph $G(n,p)$. For $\ell \ge 3$ we prove that asymptotically
  almost surely $G \cup G(n,p)$ contains $\min \{\delta(G), \lfloor n/\ell
  \rfloor \}$ pairwise vertex-disjoint cycles $C_\ell$, provided $p \ge C \log
  n/n$ for~$C$ sufficiently large. Moreover, when $\delta(G) \ge\alpha n$ with
  $0<\alpha \le 1/\ell$ and~$G$ and is not `close' to the complete bipartite
  graph $K_{\alpha n,(1-\alpha) n}$, then $p \ge C/n$ suffices to get the same
  conclusion. This provides a stability
  version of our result.
  In particular, we conclude that
  $p \ge C/n$ suffices when $\alpha>1/\ell$ for finding $\lfloor
  n/\ell \rfloor$ cycles $C_\ell$.

  Our results are asymptotically optimal. They can be seen as an interpolation
  between the Johansson--Kahn--Vu Theorem for $C_\ell$-factors and the
  resolution of the El-Zahar Conjecture for $C_\ell$-factors by Abbasi.
\end{abstract}

\maketitle

\section{Introduction and results}

Given a graph $H$, deciding whether a graph~$F$ has an \emph{$H$-factor}, i.e.~the union of
$\lfloor v(F)/v(H) \rfloor$ pairwise vertex-disjoint copies of $H$, is
computationally hard~\cite{KirkpatrickHell} already when $H$ is a triangle.
Consequently, it is valuable to determine natural sufficient conditions on~$F$
which guarantee an $H$-factor. Two natural and prominent conditions of this type
concern minimum degree conditions on the one hand, and edge densities in the
setting of random graphs on the other hand. In this extended abstract we
concentrate on the case that~$H$ is the $\ell$-vertex cycle $C_\ell$ with
$\ell \ge 2$; in the degenerate case $\ell=2$ the cycle $C_\ell$ is a single
edge and we obtain a perfect matching. We shall first summarise what is known
in these two different settings, and then consider a well-studied combination of
both, the so called randomly perturbed graph model.

Let us first consider the case when $F$ is the binomial random graph $G(n,p)$,
which is a graph on~$n$ vertices in which each edge is chosen independently with
probability $p=p(n)$. In this case we are interested in the probability
\emph{threshold} $\hat{p}=\hat{p}(n,H)$ such that \emph{asymptotically almost
  surely}~(a.a.s.), that is, with probability tending to one as $n$ tends to
infinity, $G(n,p)$ contains an $H$-factor when $p =\omega (\hat{p})$, and
a.a.s.~$G(n,p)$ does not contain an $H$-factor when $p =o(\hat{p})$. For
$H=C_\ell$ with $\ell \ge 3$, the celebrated theorem of Johansson, Kahn, and
Vu~\cite{johansson2008factors} implies that the threshold is
$\hat{p}(n,C_\ell)=n^{-(\ell-1)/\ell} (\log n)^{1/\ell}$, where in the case of a perfect matching ($\ell=2$) the threshold $\log n/n$ has been known since the seminal work of
Erd\H{o}s and R\'enyi~\cite{erdHos1966existence}.

Turning to minimum degree conditions enforcing the existence of
$C_\ell$-factors, let $G_{\alpha}$ be any $n$-vertex graph of minimum degree at
least $\alpha n$ for $0\le\alpha\le 1$. By Dirac's Theorem~\cite{dirac1952},
$\alpha\ge 1/2$ suffices for guaranteeing a perfect matching. Corr\'adi and
Hajnal~\cite{corradi1963maximal}, on the other hand, showed that $\alpha\ge 2/3$
suffices for guaranteeing a $C_3$-factor. Abbasi~\cite{abbasi1998solution}
generalised this, confirming more generally a conjecture of El-Zahar, showing
that any graph $G$ with minimum degree $\delta(G) \ge \tfrac n\ell \cdot \lceil
\tfrac \ell2 \rceil$ contains a $C_\ell$-factor. Note that this implies that the
case of even~$\ell$ and that of odd~$\ell$ behave differently: for even $\ell$
we need $\delta(G)\ge n/2$, while for odd $\ell$ we need
$\delta(G)\ge\tfrac{\ell+1}{2\ell} n$. This is not surprising, as in general the
optimal minimum degree enforcing an $H$-factor depends on the chromatic number
(or some variant, called the critical chromatic number). See the survey by
K\"uhn and Osthus~\cite{kuhn2009embedding} for more details.

These results provide optimal minimum degree conditions. This can be easily
seen by taking the complete bipartite graph with partition classes of sizes $n/2-1$ and
$n/2+1$ for~$\ell$ even, and the
complete tripartite graph with classes of sizes $\frac{n}{\ell}-1$,
$\frac{\ell-1}{2 \ell}n+1$ and $\frac{\ell-1}{2 \ell}n$ for $\ell$~odd.

\smallskip

Bohman, Frieze, and Martin~\cite{bohman2003} combined these two settings,
introducing the \emph{randomly perturbed graph} $G_\alpha \cup G(n,p)$, which is
obtained by adding to a deterministic graph $G_\alpha$ on~$n$ vertices with
minimum degree at least $\alpha n$, a random graph graph $G(n,p)$ on the same
vertex set. This model can be motivated as follows. We can alternatively think
of $G(n,p)$ as a graph obtained by the following \emph{random process}: start
with the empty graph on~$n$ vertices and add random edges, one by one. Asking
how many random edges we need to add to guarantee a certain property then
corresponds to determining the threshold for this property. In fact, as
formulated, this process does not generate $G(n,p)$ but rather the uniform
random graph $G(n,M)$; but this is easy and standard to fix. It is then natural
to modify this process by starting, instead of the empty graph, with some other
deterministic $n$-vertex graph, for example a graph $G_\alpha$ with minimum
degree $\alpha n$. The question then is how this influences the number of random
edges needed to enforce the considered property. When $\alpha>0$ is small then
this can be seen as asking how much the threshold for the property is influenced
by the existence of low-degree vertices. When $p$ is small then, in analogy to
the \emph{smoothed analysis of algorithms} introduced Spielman and
Teng~\cite{spielman2004smoothed}, this question can be seen as asking how
`atypical' extremal graphs for the property are.

With this in mind, for a fixed $\alpha>0$ and $H$, we define the
\emph{perturbed threshold} for an $H$-factor as the $\hat{p}=\hat{p}(n,\alpha,H)$
such that:
\begin{enumerate}[label=\rom]
	\item when $p=\omega(\hat{p}(n,\alpha,H))$, for any $G_\alpha$ we have 
	\[ \lim_{n\to\infty}\PP\big(G_\alpha \cup G(n,p) \text{ contains an $H$-factor}\big) =1, \]
	and
	\item when $p=o(\hat{p}(n,\alpha,H))$, there exists a $G_\alpha$ such that 
	\[ \lim_{n\to\infty}\PP\big(G_\alpha \cup G(n,p) \text{ contains an $H$-factor}\big) =0. \]
\end{enumerate}

Balogh, Treglown, and Wagner~\cite{balogh2019tilings} proved a lower bound on
$\hat{p}(n,\alpha,H)$ for any $H$, which is sharp for all $H$ provided
$\alpha$ is small enough. In our setting where $H=C_\ell$, their result
states that for any $\ell \ge 3$ and for any $\alpha >0$, there is a constant
$C=C(\alpha,\ell)$ such that $G_\alpha \cup G(n,p)$ with $p \ge C
n^{-(\ell-1)/\ell}$ a.a.s.\ contains a $C_\ell$-factor; their result gives the lower bound of $p \ge C/n$ 
also in the case $\ell=2$, which
was already proven in~\cite{bohman2003}. Compared to the threshold
$n^{-(\ell-1)/\ell} (\log n)^{1/\ell}$ in $G(n,p)$ alone this saves a
$\log$-factor. By taking $G_\alpha$ to be the complete bipartite graph
$K_{\alpha n,(1-\alpha)n}$, it is easy to see that this is optimal for $\alpha <
1/\ell$, so $\hat{p}(n,\alpha,C_\ell)=n^{-(\ell-1)/\ell}$ when $0 < \alpha
<1/\ell$.

The problem of determining the perturbed threshold for the remaining range of
$\alpha$ (that is, $\alpha \in [1/\ell,1/2)$ for $\ell$ even, and $\alpha \in
[\tfrac 1\ell, \tfrac {\ell+1}{2\ell})$ for $\ell$ odd) remained open. In fact,
this `intermediate regime' where $\alpha$ is not small but potentially far
from the extremal bound in the deterministic setting has so far only
infrequently been studied for randomly perturbed graphs. One
exception is the work by Han, Morris, and Treglown~\cite{han2019tilings}
concerning clique-factors and proving in particular that
$\hat{p}(n,\alpha,C_3)=n^{-1}$ for $\alpha \in (1/3,2/3)$. We recently filled in
the remaining gap $\alpha=1/3$ and proved $\hat{p}(n,1/3,C_3)=\log n/n$
in~\cite{triangle_paper}. In this extended abstract we generalise these results to larger $\ell$
and determine the perturbed threshold $\hat{p}(n,\alpha,C_\ell)$ in all open cases for $C_\ell$-factors
with $\ell \ge 3$.

\begin{theorem}
	\label{thm:zero}
	For any integer $\ell \ge 3$ and any
	$\alpha \ge 1/\ell$, there exists $C>0$ such that for any
  $n$-vertex graph $G_\alpha$ with minimum degree $\delta(G_{\alpha})=\alpha n$, the
  randomly perturbed graph $G \cup G(n,p)$ a.a.s.\ contains a $C_\ell$-factor
	\begin{enumerate}[label=\abc] 
	\item \label{thm:no-log}
    if $\alpha > 1/\ell$ and $p\ge C /n$, and also
   	\item \label{thm:log}
    if $\alpha=1/\ell$ and $p\ge C\log n/n$.
	\end{enumerate}
\end{theorem}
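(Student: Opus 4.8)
The plan is to deduce Theorem~\ref{thm:zero} from two more general statements that form the technical core of the paper: \textbf{(A)} if $p\ge C\log n/n$ then a.a.s.\ $G\cup\Gnp$ contains $\min\{\delta(G),\lfloor n/\ell\rfloor\}$ pairwise vertex-disjoint copies of $C_\ell$; and \textbf{(B)} a stability version, asserting that for every $\xi>0$ there is $C=C(\ell,\xi)$ such that if $\delta(G)\ge\alpha n$ and $G$ is \emph{not} $\xi$-close (in edit distance) to $K_{\alpha n,(1-\alpha)n}$, then already $p\ge C/n$ suffices for the same conclusion. Part~\ref{thm:log} is then immediate from~(A): when $\delta(G)=n/\ell$ we get $\min\{\delta(G),\lfloor n/\ell\rfloor\}=\lfloor n/\ell\rfloor$ disjoint copies of $C_\ell$, i.e.\ a $C_\ell$-factor. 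For part~\ref{thm:no-log} we apply~(B) with $1/\ell$ in the role of $\alpha$: the hypothesis $\delta(G)=\alpha n>n/\ell$ forces $G$ to be at edit distance $\Omega((\alpha-1/\ell)\,n^2)$ from $K_{n/\ell,(1-1/\ell)n}$, since raising the minimum degree of the latter by $\Omega(n)$ requires $\Omega(n)$ new edges at essentially all of its large-side vertices; so with $\xi$ a small enough multiple of $\alpha-1/\ell$, statement~(B) produces $\lfloor n/\ell\rfloor$ disjoint copies of $C_\ell$, hence a $C_\ell$-factor, once $p\ge C(\ell,\alpha)/n$.

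To prove~(A) and~(B) I would use the \emph{absorption method}. The first step is to build a small \emph{absorbing set} $A_0\subseteq V$ with $|A_0|=o(n)$ such that, for every $W\subseteq V\setminus A_0$ with $|W|$ small and of the right residue modulo $\ell$, the graph $(G\cup\Gnp)[A_0\cup W]$ has a $C_\ell$-factor. As usual this is assembled, via the template/absorbing framework, from constant-size \emph{absorbing gadgets}: for a target $\ell$-set $T$, a gadget is a bounded vertex set $S$ such that both $(G\cup\Gnp)[S]$ and $(G\cup\Gnp)[S\cup T]$ span a $C_\ell$-factor. Using $\delta(G)\ge n/\ell$ one builds most of such a gadget inside $G$ (a bounded number of short deterministic paths, exploiting that every vertex has at least $n/\ell$ neighbours) and closes everything into cycles with only a bounded number of $\Gnp$-edges; since each gadget spends $O(1)$ random edges on $\Omega(1)$ ``free'' vertices, a counting argument shows the relevant gadgets are abundant enough --- even at $p=C/n$ --- that a.a.s.\ a random-greedy (or template) selection yields the desired disjoint family.

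The second, and main, step is an \emph{almost-perfect $C_\ell$-tiling}: after deleting $A_0$, cover all but a set $W$ with $|W|=o(|A_0|)$ by pairwise-disjoint copies of $C_\ell$. Here the two regimes genuinely diverge. If $G$ is far from $K_{\delta(G),\,n-\delta(G)}$, one shows that $G$ admits an (almost) perfect \emph{fractional} $C_\ell$-tiling --- essentially the complete bipartite graph is the unique extremal configuration for the corresponding fractional tiling problem, which is why the stability hypothesis is phrased this way --- and then rounds it to an integral almost-tiling using the constant-density random edges of $\Gnp$ with $p\ge C/n$, in the spirit of the clique-tiling arguments of Han--Morris--Treglown~\cite{han2019tilings}; this covers~(B) and the non-extremal case of~(A). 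If instead $G$ is close to the extremal bipartite graph --- relevant only when $\delta(G)=n/\ell$, giving~(A) --- no perfect fractional tiling need exist, and this is exactly where the extra logarithmic factor enters: with $p\ge C\log n/n$ the random graph has a Hamilton cycle, and hence a $P_{\ell-1}$-factor, on the large side $B$ (a threshold of order $\log n/n$), and stitching each such path to a distinct vertex of the small side $A$ through the deterministic bipartite edges produces the $C_\ell$'s, the few vertices where $G$ deviates from the complete bipartite graph on $(A,B)$ being swallowed by $A_0$. Finally one merges the two pieces: $W$ has the right residue modulo $\ell$ by a routine counting argument and $|W|$ is small enough to be absorbed by $A_0$, which completes the $C_\ell$-factor.

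I expect the tiling step, together with the accompanying stability analysis, to be the main obstacle: one must pin down precisely which host graphs $G$ obstruct the $p=C/n$ bound --- morally only those close to a complete bipartite graph --- deal with them by hand (ultimately through the matching/Hamilton-cycle argument on the large side, which is what forces the $\log$ factor when $\alpha=1/\ell$), and show that every remaining $G$ supports a fractional $C_\ell$-tiling robust enough to survive rounding against a sparse random graph. A secondary difficulty is engineering the absorbing gadgets so that they function with $p$ as small as $C/n$: this forces all but $O(1)$ of each gadget's edges to come from $G$, and hence a careful use of the bound $\delta(G)\ge n/\ell$ --- and, when $G$ is near-bipartite, the extra $\log$ factor simply to ensure that every vertex is reachable by the handful of random edges its gadget needs.
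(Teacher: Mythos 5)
Your top-level reduction is essentially the paper's: part~\ref{thm:log} is deduced from a ``$\min\{\delta(G),\lfloor n/\ell\rfloor\}$ disjoint cycles at $p\ge C\log n/n$'' statement, and part~\ref{thm:no-log} from a stability statement saying that only graphs close to $K_{\alpha n,(1-\alpha)n}$ require the $\log$-factor (the paper phrases closeness via Definition~\ref{def:stability} rather than edit distance, but your degree-count showing that $\delta(G)\ge\alpha n$ with $\alpha>1/\ell$ forces distance $\Omega((\alpha-1/\ell)n^2)$ from the extremal graph plays the same role, with $C$ depending on $\alpha$ just as the paper's implicitly does). Your proposed proof of the non-extremal statement by absorption plus fractional tilings and rounding is a legitimately different route from the paper's, which uses the regularity method (Lemma~\ref{lem:stable_cluster}, super-regular pairs combined with random bipartite graphs as in Lemmas~\ref{lem:tripartite2} and~\ref{lem:bipartite}); that part of your plan is in the spirit of Han--Morris--Treglown and is plausible.

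The genuine gap is in the extremal case, and it is exactly the point where the paper needs its Sublinear Theorem (Theorem~\ref{thm:sublinear}). Suppose $G$ is $(1/\ell,\beta)$-stable with parts $A,B$ and $|A|=n/\ell-m$ for some $1\le m\le\beta n$, which is perfectly compatible with $\delta(G)=n/\ell$. Then \emph{every} $C_\ell$-factor of $G\cup G(n,p)$ must contain at least $m$ cycles that avoid $A$ entirely; since at $p=\Theta(\log n/n)$ the random graph alone a.a.s.\ contains only $O(\log^\ell n)$ vertex-disjoint copies of $C_\ell$, these cycles must use the edges of $G[B]$, whose minimum degree is only about $m$ --- an arbitrary, possibly sublinear quantity (anywhere from $1$ to $\beta n$). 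Your sketch has no mechanism for producing them: the Hamilton-cycle/$P_{\ell-1}$-factor-plus-stitching step leaves roughly $m/(\ell-1)$ surplus paths with no $A$-vertex to close them, and closing a path via a random edge between two \emph{prescribed} endpoints fails since each such edge is present with probability $o(1)$; nor can they be ``swallowed by $A_0$'', both because the deviation from the extremal graph (imbalance and degree-exceptional vertices) can involve $\Theta(\beta n)$ vertices, far more than an $o(n)$-capacity absorber handles, and because the counting obstruction above applies to the final factor, absorber included --- an absorbing set does not conjure cycles inside $B$. Finding $m$ disjoint $C_\ell$'s in a graph of minimum degree $m$ with $m$ sublinear is the content of Theorem~\ref{thm:sublinear} (proved via families of disjoint stars, expansion of $G(n,p)$, dependent random choice, and even-cycle Tur\'an numbers), and it is invoked as the very first step of the proof of the Extremal Theorem~\ref{thm:extremal}; your proposal contains no substitute for this ingredient, so as written the extremal case --- and hence part~\ref{thm:log} --- does not go through.
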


The bound on $p$ in~\ref{thm:log} is asymptotically optimal. To see this, take
$p \le \tfrac 12 \frac{\log n}{n}$ and $G=G_\alpha$ to be the graph $K_{n/\ell,n-n/\ell}$.
Let $A$ and $B$ be its partition classes with $|A|<|B|$, and observe this graph has minimum degree
$n/\ell$.
By an easy first-moment calculation, a.a.s.~there is at least a
polynomial number of vertices in $B$ that only have neighbours in $A$.
In particular, any cycle containing one of such vertices must contain
at least two vertices from $A$.
However, if a $C_\ell$-factor exists in $G \cup G(n,p)$, since $|A|=n/\ell$,
for each copy of $C_\ell$ that has at least two
vertices in $A$, there must be at least one copy of $C_\ell$ fully contained
in $B$, and thus with all edges from $G(n,p)$.
Again by an easy first-moment calculation, a.a.s.~there are at most
$O(\log^\ell n)$ copies of $C_\ell$ in $G(n,p)$ alone.
Therefore a.a.s.~a $C_\ell$-factor does not exist in $G \cup G(n,p)$
for $p \le \tfrac 12 \frac{\log n}{n}$.
Together with~\ref{thm:log} this implies that $\hat{p}(n,1/\ell,C_\ell)=\log
n/n$.

Now we turn to the optimality of~\ref{thm:no-log}. For even $\ell \ge 4$ and
$\alpha \in (1/\ell , 1/2)$, we consider $G=K_{\alpha n, (1-\alpha)n}$. It has
minimum degree $\alpha n$ and there can be at most $\frac{\alpha
  n}{\ell/2}<\frac{n}{\ell}$ copies of $C_\ell$ using only edges of $G$ and,
therefore, we need at least a linear number of random edges. Together
with~\ref{thm:no-log} this implies that when $\ell$ is even,
$\hat{p}(n,\alpha,C_\ell)=1/n$ for $\alpha \in (1/\ell , 1/2)$. For odd $\ell
\ge 3$, then the same graph shows optimality for $\alpha \in (1/\ell , 1/2)$, as
$G$ is bipartite and does not contain any odd cycle and again we need at least a
linear number of random edges. For $\alpha \in [\tfrac 12,
\tfrac{\ell+1}{2\ell})$, we consider the tripartite complete graph with one
class of size $\left( \alpha -\frac{\ell-1}{2\ell} \right) n$ and two classes of
sizes $\left( \frac{1}{2}-\frac{\alpha}{2}+\frac{\ell-1}{4\ell} \right) n$. This graph
has minimum degree $\alpha n$ and, as above, there are at most $(\alpha
-\frac{\ell-1}{2\ell}) n < \frac{n}{\ell}$ copies of $C_\ell$ using only edges of $G$
and we need a linear number of edges from $G(n,p)$. Together
with~\ref{thm:no-log} this implies that when $\ell$ is odd,
$\hat{p}(n,\alpha,C_\ell)=1/n$ for $\alpha \in (\tfrac 12,
\tfrac{\ell+1}{2\ell})$.
Table~\ref{fig:cycle-factor} summarises the resulting perturbed
thresholds for cycle factors.

\begin{table}[htbp]
	\begin{center}
	\begin{tabular}{||c||c| c c c c c||} 
		\hline
		Even $\ell$ & $\alpha$ & $\alpha=0$ & $0<\alpha<1/\ell$ & $\alpha=1/\ell$ & $1/\ell<\alpha<1/2$ & $1/2 \le \alpha$\\
		\hline\hline
		Odd $\ell$ & $\alpha$ & $\alpha=0$ & $0<\alpha<1/\ell$ & $\alpha=1/\ell$ & $1/\ell<\alpha<\tfrac{\ell+1}{2\ell}$ & $\tfrac{\ell+1}{2\ell} \le \alpha$\\
		\hline\hline
		& $\hat p$ & $n^{-(\ell-1)/\ell} (\log n)^{1/\ell}$ & $n^{-(\ell-1)/\ell}$ & $n^{-1} \log n$ & $n^{-1}$ & $0$ \\ 
		\hline
	\end{tabular}	
	\end{center}
	\caption{The perturbed threshold $\hat p=\hat p(n,\alpha,C_\ell)$ for $C_\ell$-factor in $G_\alpha \cup G(n,p)$, where $\delta(G_\alpha) \ge \alpha n$.}
	\label{fig:cycle-factor}
\end{table}

It is, further, natural to ask how this behaviour changes when instead of a
$C_\ell$-factor we are interested in covering only a smaller percentage of the
vertices with vertex disjoint $C_\ell$-copies. To this end, we can prove that we
can always find $\delta(G)$ pairwise vertex disjoint copies of $C_\ell$ in $G\cup
G(n,p)$ when $p \ge C \log n/n$.

\begin{theorem}
	\label{thm:main}
    For any integer $\ell \ge 3$, there exists a $C>0$ such that for any
    $n$-vertex graph $G$ we can a.a.s.~find $\min\{ \delta(G), \lfloor n/\ell
    \rfloor \}$ pairwise disjoint copies of $C_\ell$ in $G \cup G(n,p)$, provided that $p\ge C \log n /n$.
\end{theorem}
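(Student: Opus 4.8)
We sketch the proof of Theorem~\ref{thm:main}. Write $d:=\delta(G)$ and $k:=\min\{d,\lfloor n/\ell\rfloor\}$; we must find $k$ pairwise disjoint copies of $C_\ell$ in $G\cup G(n,p)$, and since having more random edges only helps, by monotonicity we may assume $p=C\log n/n$ exactly, with $C=C(\ell)$ large. Two degenerate cases are immediate. If $d<\ell$, then $k<\ell$ and $G(n,p)$ alone suffices: split $[n]$ into $\ell$ parts each of size $\lfloor n/\ell\rfloor$ or $\lceil n/\ell\rceil$; each induces a copy of $G(m,p)$ with $pm\gg\log m$, which is a.a.s.\ Hamiltonian and so contains a $C_\ell$, giving $\ell\ge k$ disjoint copies. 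If $d\ge\lfloor n/\ell\rfloor$, then $k=\lfloor n/\ell\rfloor$ and we want a $C_\ell$-tiling missing at most $\ell-1$ vertices --- essentially a $C_\ell$-factor ---, which is the regime with no spare vertices and carries the content of Theorem~\ref{thm:zero}\ref{thm:log}; it will be produced by the absorption-based argument sketched below, run to exhaustion. So from now on assume $\ell\le d<\lfloor n/\ell\rfloor$, hence $\ell d<n$: we seek exactly $d$ disjoint copies of $C_\ell$, with at least $\ell$ --- but possibly only $O_\ell(1)$ --- spare vertices.

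The skeleton will be a matching in $G$. Since $\delta(G)=d$ and $n>2d$, the Berge--Tutte formula gives $\nu(G)\ge d$: for every $S\subseteq V(G)$ one checks that $o(G-S)-|S|\le n-2d$, treating $|S|<d$ (where each component of $G-S$ has at least $d-|S|+1$ vertices) separately from $|S|\ge d$ (where $o(G-S)-|S|\le n-2|S|$). Fix a matching $M=\{a_ib_i:i\in[d]\}$ in $G$; the aim is to grow from each edge $a_ib_i$ a copy of $C_\ell$ through $\ell-2$ further ``connector'' vertices, keeping the $d$ copies pairwise disjoint. The decisive obstacle is that $G(n,p)$ with $p=C\log n/n$ has diameter $\omega(1)$: we cannot join two prescribed vertices by a path of bounded length using random edges only, so every cycle has to use its $G$-edges to ``steer''. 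This forces a case distinction --- governed by the stability phenomenon behind Theorem~\ref{thm:zero} --- according to whether $G$ is close to the complete bipartite graph $K_{d,n-d}$ or far from it.

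In its cleanest form the extremal case is this: $G$ is $\xi$-close to $K_{d,n-d}$, i.e.\ there is $A\subseteq V(G)$ with $|A|=d$ such that every $a\in A$ is $G$-adjacent to all but at most $\xi n$ vertices of $B:=V\setminus A$, and moreover $d\le(1-\eta)n/\ell$, where $\eta$ is a small constant and $\xi$ is small relative to $\eta$. Then $|B|\ge(1-1/\ell)n$ and $p|B|\gg\log|B|$, so a.a.s.\ $G(n,p)[B]$ is Hamiltonian; cut a Hamilton path of it into consecutive blocks on $\ell-1$ vertices to obtain a family $\mathcal{Q}$ of $\lfloor|B|/(\ell-1)\rfloor\ge\lfloor n/\ell\rfloor$ pairwise disjoint copies of $P_{\ell-1}$ in $G(n,p)$. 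In the bipartite graph on $A\cup\mathcal{Q}$ in which $a\sim Q$ iff $a$ is $G$-adjacent to both endpoints of $Q$, every $a\in A$ has degree at least $|\mathcal{Q}|-\xi n>d$, so Hall's theorem provides an injection $a\mapsto Q_a$, and $\{a\}\cup V(Q_a)$ spans a copy of $C_\ell$; this yields the $d$ disjoint copies (and in fact does not use $M$). When $d$ lies within a constant factor of $n/\ell$, however, $\mathcal{Q}$ exceeds $d$ only by $O_\ell(1)$ members, and matching anchors to segments becomes a near-perfect matching with almost no slack --- this is exactly the delicate extremal near-$C_\ell$-factor of $K_{n/\ell,\,n-n/\ell}\cup G(n,p)$, the very lower-bound example for Theorem~\ref{thm:zero}\ref{thm:log}, which requires the finer argument of the full paper. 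If, on the other hand, $G$ is $\xi$-far from every $K_{d,n-d}$, then --- extending to general $\ell$ the $\ell=3$ analysis of~\cite{triangle_paper} --- $G$ has enough expansion to contain many ``flexible'' bounded-size configurations, a constant proportion of which close into a copy of $C_\ell$ using a single random edge and which are spread out enough for a greedy/first-moment argument to extract $d$ disjoint closed copies; near $d=n/\ell$ this is again organised around a small absorbing $C_\ell$-structure reserved at the outset, as in the $C_\ell$-factor proof, but stopped as soon as $d$ cycles have been found.

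The two main obstacles are therefore the following. First, the super-constant diameter of $G(n,p)$ rules out linking prescribed pairs by bounded-length random paths, which is precisely why the near-bipartite configuration has to be peeled off and treated directly: there a path-endpoint lying in $B$ is automatically a valid neighbour of its anchor, sidestepping the diameter issue. Second, when $d$ is near $n/\ell$ --- and in the near-$C_\ell$-factor subcase $d\ge\lfloor n/\ell\rfloor$ --- there is essentially no slack, so an absorption argument is unavoidable, and for $\ell\ge4$ one must make an absorbing structure function with only $\Theta(\log n/n)$ random edges, far fewer than the $n^{-(\ell-1)/\ell}$ available in~\cite{balogh2019tilings}.
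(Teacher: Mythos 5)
Your sketch does not constitute a proof: at every point where the theorem is actually hard, you defer to an argument you do not supply. The case $\delta(G)\ge\lfloor n/\ell\rfloor$ (the near-$C_\ell$-factor regime, which you correctly identify as carrying the content of Theorem~\ref{thm:zero}\ref{thm:log}) is handled by the phrase ``absorption-based argument sketched below, run to exhaustion'', but no absorbing structure is ever constructed, and you yourself list as an unresolved obstacle that such a structure would have to work with only $\Theta(\log n/n)$ random edges; likewise the extremal subcase with $d$ close to $n/\ell$ is explicitly handed off to ``the finer argument of the full paper'', and the non-extremal case is one sentence about ``flexible bounded-size configurations'' with no mechanism behind it. The only portion you genuinely prove is the extremal subcase under a hypothesis much stronger than any reasonable notion of closeness to $K_{d,n-d}$ (\emph{every} vertex of $A$ sees all but $\xi n$ of $B$, and $d\le(1-\eta)n/\ell$); since the complementary ``far'' case is never made precise, the two cases do not combine to cover all graphs. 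By contrast, the paper's proof of Theorem~\ref{thm:main} is a reduction: it splits according to $\delta(G)\le n/(64\ell^2)$ or not, and invokes Theorem~\ref{thm:sublinear} in the first case and Theorems~\ref{thm:non-extremal} and~\ref{thm:extremal} (via the precise stability notion of Definition~\ref{def:stability} and Lemma~\ref{lem:stable_cluster}, regularity, and the super-regular-plus-random factor Lemmas~\ref{lem:tripartite}--\ref{lem:bipartite}) in the second; none of that machinery, or a substitute for it, appears in your proposal.

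A second concrete gap is the sublinear-degree regime, which your case analysis does not cover at all. For $\ell\le d<\lfloor n/\ell\rfloor$ with $d=o(n)$ (say $d\approx\sqrt n$), the dichotomy ``close to / far from $K_{d,n-d}$'' gives you nothing: your Hall-type argument needs $|A|=d$ anchors each seeing almost all of $B$, and your ``expansion'' heuristic for the far case has no content when the minimum degree is sublinear (there is no regularity/stability statement available at that density, and a single vertex's neighbourhood is far too small for property-type statements like ``every set of size $n/(64\ell)$ contains a $P_{\ell-1}$ of $G(n,p)$'' to apply inside it). This is exactly why the paper proves Theorem~\ref{thm:sublinear} separately, with arguments specific to that range: the star-family Lemma~\ref{lem:manystars} combined with the expansion Lemma~\ref{lem:expander} for $m\le M\sqrt n$, and dependent random choice plus the Bondy--Simonovits bound (and multi-round edge exposure at probability $q=C\log n/m^2$) in Proposition~\ref{prop:gesqrt} for larger $m$. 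Your matching skeleton via Berge--Tutte, which you anyway abandon because of the diameter obstruction, does not substitute for any of this.
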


For $\ell=3$ this is a perturbed version of a result of
Dirac~\cite{dirac1963maximal}, that states that an $n$-vertex graph $G$ with
$\tfrac 12 n \le \delta(G) \le \tfrac 23 n$ contains at least $2 \delta(G)-n$
pairwise vertex-disjoint triangles. For $\ell \ge 4$ an approximate version of
this result for longer cycles follows from a more general result of
Koml{\'o}s~\cite{komlos2000tiling}. More precisely, for any $\eps>0$ and large
enough $n$, he showed that when $\ell$ is odd (respectively even) there are at least $2
\delta(G)-(1+\eps)n$ (respectively $\tfrac 2\ell \delta(G)-\eps n$) pairwise
vertex-disjoint copies of $C_{\ell}$ in any $n$-vertex graph $G$ with $\tfrac 12
n \le \delta(G) \le \tfrac{\ell+1}{2 \ell}n$ (respectively $\delta(G) \le
\frac{1}{2}n$).

Moreover, we establish a stability version of Theorem~\ref{thm:main}:
when the graph $G$ has minimum degree linear in $n$ and $G$ is not `close' to
$K_{m,n-m}$ with $m=\min \{\delta(G),n/\ell \}$, then a.a.s $G \cup G(n,p)$
contains $m$ pairwise vertex-disjoint copies of $C_\ell$ already at probability
$p \ge C/n$. To formalise this we introduce the following notion of stability,
where, for numbers $a$, $b$, $c$, we write $a \in b \pm c$ for $b-c \le a \le b+c$.

\begin{definition}
	\label{def:stability}
	For $0<\beta < \alpha < 1/2$ we say that an $n$-vertex graph $G$ is
  \emph{$(\alpha,\beta)$-stable} if there exists a partition of $V(G)$ into two
  sets $A$ and $B$ of size $|A|=(\alpha \pm \beta) n$ and $|B|=(1-\alpha \pm
  \beta) n$ such that the minimum degree of the bipartite subgraph $G[A,B]$ of
  $G$ induced by $A$ and $B$ is at least $\alpha n/4$, all but $\beta n$
  vertices from $A$ have degree at least $|B|-\beta n$ into $B$, all but
  $\beta n$ vertices from $B$ have degree at least $|A|-\beta n$ into $A$, and $G[B]$ contains at most $\beta n^2$ edges.
\end{definition}

Roughly speaking, the stability condition with $\alpha=1/\ell$ says that the
size of $B$ is roughly $(\ell-1)$-times the size of $A$, there is a minimum
degree condition between $A$ and $B$, in each part all but few vertices see
most of the other part, and $B$ is almost independent.
Moreover, for $0 < \alpha \le 1/\ell$ and $m=\alpha n$ an
integer, $K_{m,n-m}$ is $(\alpha,0)$-stable.
Using regularity method, we can prove the following stability result.

\begin{theorem}[Stability Theorem]
	\label{thm:non-extremal}
	Fix an integer $\ell \ge 3$. For $0 < \beta < 1/(4 \ell)$ there exist $\gamma>0$ and
  $C>0$ such that for any $\alpha$ with $4 \beta \le \alpha \le 1/\ell$ the
  following holds. Let $G$ be an $n$-vertex graph with minimum degree $\delta(G)
  \ge \left(\alpha - \gamma \right) n$ that is not $(\alpha,\beta)$-stable. With
  $p\ge C /n$ a.a.s.~the perturbed graph $G \cup G(n,p)$ contains $ \min \{
  \alpha n, \lfloor n/\ell \rfloor \} $ pairwise vertex-disjoint disjoint copies
  of~$C_\ell$.
\end{theorem}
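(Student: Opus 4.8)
Write $m:=\min\{\alpha n,\lfloor n/\ell\rfloor\}$, so that $\ell m\le n$; we must produce $m$ pairwise vertex-disjoint copies of $C_\ell$ in $G\cup\Gnp$. The guiding principle is that $G$ should carry almost all of the structure: each copy of $C_\ell$ will be either a copy already present in $G$, or a path of length $\ell-1$ in $G$ whose endpoints are joined by a single edge of $\Gnp$. Since with $p=C/n$ a fixed pair spans a random edge with probability only $C/n$, we cannot close up arbitrary $G$-paths; we must first understand $G$ well enough to decide where the random edges should sit, and only then commit to them. So I would begin by exposing $\Gnp$ and recording the properties holding a.a.s.\ for $C$ a large constant: it has at most $2Cn$ edges; inside any linear-sized vertex set it contains a matching covering all but an $o_C(1)$-fraction of it; between any two disjoint sets of size at least $\eps n$ it has at least $\eps' n$ edges; and it links linear-sized sets robustly enough to serve as an absorber for $o(n)$ stray vertices. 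All of these hold simultaneously a.a.s.

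The core is a deterministic structural statement: if $\delta(G)\ge(\alpha-\gamma)n$ and $G$ is \emph{not} $(\alpha,\beta)$-stable, then either $G$ itself contains $m$ vertex-disjoint copies of $C_\ell$, or it contains a ``flexible skeleton'' for the construction above — a linear-sized subgraph $H$ of linear minimum degree that is genuinely far from the bipartition with small side $\alpha n$ — inside which one can position a matching $\cM=\{x_iy_i:i\in[m]\}$ of $\Gnp$ and route $m$ vertex-disjoint paths of length $\ell-1$ in $G$, the $i$-th from $x_i$ to $y_i$, up to a reservoir of $o(n)$ vertices set aside in advance. I would prove this by a case analysis following the four conditions in Definition~\ref{def:stability}: for every admissible partition $V(G)=A\cup B$ at least one of them fails, and the failing one yields $H$. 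For instance, if $e(G[B])\ge\beta n^2$ for every admissible $B$, then $G[B]$ contains a subgraph of minimum degree $\Omega(n)$ on $\Omega(n)$ vertices in which, together with the cross edges to $A$ forced by $\delta(G)$ and a few edges of $\Gnp$, the copies of $C_\ell$ are easily routed with room to spare; if instead the bipartite minimum-degree condition between $A$ and $B$ fails, or a linear number of vertices of $A$ (resp.\ of $B$) miss a linear part of $B$ (resp.\ of $A$), one likewise extracts a dense piece or two linear-sized pieces between which $\Gnp$ supplies the non-bipartite edges. The reservoir is then carved out of $H$ as a collection of short gadgets, each completable to a copy of $C_\ell$ through any one of linearly many vertices while using at most one edge of $\Gnp$.

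Granting this, the rest is standard. If $G$ itself has $m$ disjoint copies of $C_\ell$ we are done with no random edges. Otherwise fix $\cM$ and the reservoir, route all but $o(m)$ of the paths inside $V(G)$ minus the reservoir — growing each path of length $\ell-1$ between its designated pair greedily, which works because at each step the unused vertices still induce the structural features of $H$ — and observe that the unmatched pairs of $\cM$ together with the unused vertices form an $o(n)$-set typical for the reservoir; this set is absorbed using the reservoir and the robust linking property of $\Gnp$ to produce the last $o(m)$ copies, completing the $m$ disjoint copies of $C_\ell$.

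The main obstacle is the structural statement, and within it the regime where $G$ is close to $K_{\alpha n,(1-\alpha)n}$ but violates exactly one condition of Definition~\ref{def:stability} by a margin that may be far smaller than $\beta$ (recall $\gamma$ is chosen after $\beta$): there one must extract from that single violated condition, simultaneously, the room to route the bulk of the paths, the material for the reservoir, and compatibility with the placement of $\cM$ — all while the endpoints of $\cM$ eat into the available vertices and must be kept out of path interiors. The case $\alpha=1/\ell$ is the tight one, since then $m\approx n/\ell$, the $m$ copies amount essentially to a $C_\ell$-factor, and there are no spare vertices to ease the routing or the absorption; everything downstream of the structural statement is routine bookkeeping.
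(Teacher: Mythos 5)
There is a genuine gap, and it lies in your ``guiding principle'' that every cycle is either fully contained in $G$ or a $G$-path on $\ell$ vertices closed by a \emph{single} edge of $\Gnp$; the deterministic structural statement you build on this is false for larger $\ell$. Take $\alpha=1/\ell$ with $\ell\ge 6$ and $G=K_{2n/\ell,(1-2/\ell)n}$, with small side $S$, $|S|=2n/\ell$. Its minimum degree is $2n/\ell\ge(\alpha-\gamma)n$, and it is not $(\alpha,\beta)$-stable: any partition with $|A|=(\alpha\pm\beta)n$ leaves at least $(\alpha-\beta)n$ vertices of $S$ and a linear piece of the large side inside $B$, so $e(G[B])>\beta n^2$ and the last condition of Definition~\ref{def:stability} fails for every admissible partition. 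Now in this $G$ every copy of $C_\ell$ that uses at most one non-$G$ edge contains a Hamilton path of the cycle inside the bipartite graph $G$, hence uses at least $\lfloor\ell/2\rfloor\ge 3$ vertices of $S$. Since $m=\lfloor n/\ell\rfloor$, any family of $m$ such cycles would need at least $3\lfloor n/\ell\rfloor>|S|$ vertices of $S$ --- a \emph{linear} deficit, which no $o(n)$-sized reservoir/absorber (itself restricted to one random edge per gadget) can repair, and $G$ contains no $C_\ell$ at all when $\ell$ is odd while $\Gnp$ alone contributes only $O(1)$ cycles at $p=C/n$. So in this instance one is forced to use cycles with many random edges (here some cycles need at least $\ell-\lfloor\ell/2\rfloor-1$ edges of $\Gnp$), and your case analysis of the failed stability conditions cannot deliver the matching-plus-$G$-paths skeleton you posit. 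The plan is essentially the triangle strategy from the $\ell=3$ case, and it does not survive the generalisation.

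For comparison, the paper's proof is organised so that the random graph supplies \emph{most} of the edges of a typical cycle: apply the regularity lemma, use the non-stability of $G$ via Lemma~\ref{lem:stable_cluster} to find a large matching in the reduced graph, cover the reduced graph by matching edges and cherries extended by $\ell-3$ further clusters, make the relevant pairs super-regular, and then finish each extended cherry with Lemma~\ref{lem:tripartite2} and each matching edge with Lemma~\ref{lem:bipartite}, where a cycle uses two (or fewer) edges of $G$ and up to $\ell-2$ edges of $\Gnp$; divisibility and the exceptional set are handled by explicit bookkeeping rather than by a greedy-plus-absorption argument. If you want to salvage your architecture, the structural statement must be reformulated to allow cycles consuming a variable number of random edges (anything from $1$ to $\ell-2$) depending on which bipartite-like pieces of $G$ host them, and the $\alpha=1/\ell$ case, being a spanning factor, will still require a mechanism of blow-up/super-regular type or a genuinely robust absorber at $p=C/n$, neither of which is routine.
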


Note that this immediately implies~\ref{thm:no-log} of Theorem~\ref{thm:zero}.
Indeed, given an integer $\ell \ge 3$ and $\alpha > 1/\ell$, there is a small 
enough $\beta$ such that any $n$-vertex graph $G$ with minimum degree at least $\alpha n$
is not $(1/\ell,\beta)$-stable.
We can then apply Theorem~\ref{thm:non-extremal}
on input $\beta$ (and by taking $\alpha=1/\ell$), which always gives
a $C_\ell$-factor.
Moreover, when we restrict to graphs $G$ that are not $(\alpha,\beta)$-stable, then the constant $C$ only depends on $\ell$ and $\beta$, but is independent of $\alpha$.
To deal with $(\alpha,\beta)$-stable graphs for a small enough $\beta>0$,
we need the $\log n$-factor and we prove the following.

\begin{theorem}[Extremal Theorem]
	\label{thm:extremal}
	Fix an integer $\ell \ge 3$. For $0 < \alpha_0 \le 1/\ell$ there exist
  $\beta,\gamma>0$ and $C>0$ such that for any $\alpha$ with $\alpha_0 \le
  \alpha \le 1/\ell$ the following holds. Let $G$ be an $n$-vertex graph with
  minimum degree $\delta(G) \ge \left( \alpha -\gamma \right) n$ that is
  $(\alpha,\beta)$-stable. With $p\ge C \log n /n$ a.a.s.~the perturbed graph $G
  \cup G(n,p)$ contains $\min\{ \delta(G), \lfloor \alpha n \rfloor \}$ pairwise
  vertex-disjoint copies of $C_\ell$.
\end{theorem}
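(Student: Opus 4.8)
The plan is to exploit the rigid structure imposed by $(\alpha,\beta)$-stability with a very small $\beta$: we have a partition $V(G)=A\cup B$ with $|A|=(\alpha\pm\beta)n$, $|B|=(1-\alpha\pm\beta)n$, bipartite minimum degree between $A$ and $B$ at least $\alpha n/4$, almost all vertices of $A$ complete to $B$ up to $\beta n$ exceptions, almost all vertices of $B$ complete to $A$ up to $\beta n$ exceptions, and $G[B]$ almost empty. First I would set $m=\min\{\delta(G),\lfloor\alpha n\rfloor\}$; since $\delta(G)\ge(\alpha-\gamma)n$ and $\alpha\le 1/\ell$, we have $m=(1\pm o(1))\alpha n$, and the goal is $m$ vertex-disjoint copies of $C_\ell$. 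The guiding picture is that each copy of $C_\ell$ should use exactly one vertex of $A$ (and $\ell-1$ vertices of $B$); this is forced, in the sense that $|A|\le\alpha n\le n/\ell$, so with $m$ copies we cannot afford two vertices of $A$ in any copy except in negligibly many, and copies lying entirely in $B$ must be built from random edges of which (as the optimality discussion in the introduction shows) there are only $O(\log^\ell n)$ — hence essentially all $m$ cycles have the shape ``one vertex $a\in A$, a path on $\ell-1$ vertices inside $B$ joining the two $B$-neighbours of $a$ on the cycle.''

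The core of the argument is therefore to embed, for each of the $m$ chosen vertices $a_i\in A$, a path $P_i$ on $\ell-1$ vertices inside $B$ whose two endpoints are neighbours of $a_i$ in $G$, with all the $P_i$ pairwise disjoint and disjoint from the $a_i$. Since $G[B]$ is almost empty, the $\ell-1$ vertices of $P_i$ together with their $\ell-2$ internal edges must come essentially entirely from $G(n,p)$ restricted to $B$; the two ``attaching'' edges $a_ix$ and $a_iy$ at the ends can be taken from $G$ because $a_i$ has $\ge\alpha n/4$ neighbours in $B$ (for typical $a_i$, in fact almost all of $B$). The natural tool is an absorption argument combined with a random greedy / matching step: I would (i) first set aside a small ``absorbing'' structure, then (ii) use the $\log n$ lower bound on $p$ to show that a.a.s.\ $G(n,p)[B]$ contains a near-perfect collection of vertex-disjoint paths $P_\ell^{-}$ on $\ell-1$ vertices — this is exactly a ``$P_{\ell-1}$-factor on $B$ minus a small leftover,'' and the path $P_{\ell-1}$ has a fractional/strictly-balanced structure for which the threshold for an almost-factor in $G(n,p)$ is $O(\log n/n)$ (via Johansson--Kahn--Vu, or directly since paths are easy) — then (iii) match each such path to a distinct vertex $a_i$ via its two endpoints, using a Hall-type / defect version argument on the bipartite ``compatibility'' graph between paths and the $a_i$'s (a path with endpoints $x,y$ is compatible with $a_i$ if $a_ix,a_iy\in E(G)$), and finally (iv) use the absorbers to clean up the $o(n)$ leftover vertices of $B$ and the at most $o(n)$ still-unmatched $a_i$'s, rebalancing so that exactly $m$ cycles are produced (here one must also handle the at most $\beta n$ ``bad'' vertices in $A$ and in $B$ that do not satisfy the near-completeness condition — they are few enough to be absorbed, using the few available edges of $G[B]$ and $G(n,p)$ edges, or simply excluded from the $m$ chosen $a_i$'s when $\delta(G)$ gives us slack).

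The main obstacle I anticipate is step (iii) together with the parity/divisibility bookkeeping: we must produce \emph{exactly} $m$ cycles, not $(1-o(1))m$, so the almost-factor of $(\ell-1)$-vertex paths on $B$ must be turned into a genuine cover of the right number of $B$-vertices, and the compatibility matching between $B$-paths and $A$-vertices must be perfect on the $A$-side (or on the smaller side). Ensuring Hall's condition for this matching is where stability is used crucially: because all but $\beta n$ vertices of $A$ are adjacent to all but $\beta n$ vertices of $B$, a random path on $\ell-1$ vertices of $B$ is compatible with all but $O(\beta n)$ vertices of $a\in A$, giving a nearly complete bipartite compatibility graph and hence a perfect matching after a short absorption/augmentation fix; the delicate part is interleaving this with the random path-factor so that the endpoints are ``sufficiently random'' to make the compatibility graph dense, which I would arrange by revealing $G(n,p)$ in two independent rounds — one to build the path almost-factor on $B$, a second (or a reserved sparse random bipartite graph between $A$ and $B$) to provide the flexibility needed for the matching and the final absorption of leftovers. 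Throughout, the $\log n/n$ lower bound on $p$ is exactly what is needed (and, by the construction in the introduction, necessary) to guarantee the $(\ell-1)$-vertex path almost-factor inside the almost-empty set $B$.
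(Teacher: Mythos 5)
There is a genuine gap, and it comes from the very first structural claim you make: that the shape ``one vertex in $A$, $\ell-1$ vertices in $B$'' is essentially forced, because cycles lying entirely in $B$ could only come from $G(n,p)$ and there are only $O(\log^\ell n)$ of those. The stability partition only guarantees $|A|=(\alpha\pm\beta)n$ and $e(G[B])\le \beta n^2$, with $\beta$ a (small but fixed) constant. If $|A|<m$ --- for instance $|A|=(\alpha-\beta)n$ while $\delta(G)\ge \alpha n$, so $m=\lfloor \alpha n\rfloor$ --- then in \emph{any} collection of $m$ disjoint cycles at most $|A|$ cycles can meet $A$, so at least $m-|A|$, which may be as large as $\Theta(\beta n)$, must lie entirely inside $B$. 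By your own count these cannot all come from the random edges; they must use edges of $G[B]$, which do exist in this situation (every $b\in B$ then has degree at least roughly $m-|A|$ inside $B$), but finding up to $\beta n$ disjoint $C_\ell$'s in a graph whose minimum degree is only $m-|A|$ (possibly far sublinear) with the help of $G(n,p)$ is exactly the content of the paper's Sublinear Theorem (Theorem~\ref{thm:sublinear}), which the paper invokes as the first, balancing step of its proof of Theorem~\ref{thm:extremal}. No absorption scheme operating within your ``one $A$-vertex per cycle'' framework can get around this counting obstruction, since absorbers only fix $o(n)$ leftovers, not a linear-size deficiency of $A$. Symmetrically, when $|A|>\alpha n$ one has $|B|<(\ell-1)m$, so some cycles must use $\ell-1$ vertices of $A$ and one of $B$; the paper produces these greedily (a $G$-star at a $B$-vertex into $A$ closed by a random path inside $A$). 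Your proposal contains no mechanism for either imbalance, and the premise used to dismiss them is false, since $G[B]$ need not be close to edgeless in the relevant regime.

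For the balanced core of the argument your route also differs from the paper's, and is less secure as written. The paper first greedily covers the few atypical vertices of $A$ and $B$ (this cannot be avoided by ``excluding'' them: when $m=\lfloor \alpha n\rfloor$ and $\ell\mid n$ the $m$ cycles form a spanning factor, so bad vertices must be covered), then splits the remaining part of $B$ into $\ell-1$ equal classes and applies Lemma~\ref{lem:tripartite}, where the two attaching edges come from super-regular pairs of $G$ and the path interiors from random bipartite graphs between consecutive classes; this delivers a \emph{perfect} cover with the divisibility built in. Your plan --- an almost-perfect $P_{\ell-1}$-factor in $G(n,p)[B]$, a Hall-type compatibility matching to $A$, and bespoke absorbers to make the cover exact --- might be salvageable for this balanced case, but the absorbing structures and the two-round revealing are only sketched, and in any case the proposal cannot prove the theorem without the missing balancing step described above.
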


Together with Theorem~\ref{thm:non-extremal} this implies~\ref{thm:log} of
Theorem~\ref{thm:zero}. When the minimum degree is smaller, we prove the
following result.

\begin{theorem}[Sublinear Theorem]
	\label{thm:sublinear}
	Fix an integer $\ell \ge 3$. There exists a $C>0$ such that the following
  holds for any $1 \le m \le \frac{n}{64 \ell^2}$ and any $n$-vertex graph $G$
  of minimum degree $\delta(G) \ge m$. With $p \ge C \log n/n$ a.a.s.~the
  perturbed graph $G \cup G(n,p)$ contains $m$ pairwise vertex-disjoint copies
  of $C_\ell$.
\end{theorem}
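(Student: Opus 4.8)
The plan is to build the $m$ vertex-disjoint copies of $C_\ell$ one at a time, using a single edge of $G$ at the core of each copy and closing it up into a cycle using random edges of $G(n,p)$. Since $\delta(G) \ge m$, we can greedily select a matching $M = \{x_1 y_1, \dots, x_m y_m\}$ in $G$: having chosen $j < m$ edges, the vertices $x_1, y_1, \dots, x_j, y_j$ number $2j < 2m \le \delta(G)$, so any remaining vertex still has a neighbour outside this set, and we can extend the matching. These $2m$ vertices will be the "anchors"; for each $i$ we will find an $(\ell-2)$-vertex path in $G(n,p)$ from $x_i$ to $y_i$ through $\ell-2$ fresh vertices, vertex-disjoint across all $i$ and avoiding all anchors. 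Together with the edge $x_i y_i \in G$ this yields a copy of $C_\ell$ (for $\ell = 3$ we instead need a single common neighbour; see below).

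The core step is a connection-type argument in $G(n,p)$ with $p \ge C\log n/n$. I would reserve a set $W$ of $\approx n/2$ vertices disjoint from the anchors, to be used exclusively for the internal path vertices. Process the pairs $(x_i, y_i)$ in order; when handling pair $i$, at most $2m + (i-1)(\ell-2) \le \ell m \le n/(64\ell) $ vertices of $W$ have been used, so at least $n/3$ vertices of $W$ remain available. With $p \ge C \log n/n$, a standard first-moment / union-bound computation shows that a.a.s.\ \emph{simultaneously for every} pair $u, v$ of vertices and every subset $U$ of at most $\ell m$ "forbidden" vertices, there is an $(\ell-2)$-vertex path from $u$ to $v$ in $G(n,p)$ avoiding $U$: indeed the expected number of such paths is $\gg (np)^{\ell-2}/n \gg (\log n)^{\ell-2}$, which beats the union bound over the $\le n^2 \cdot n^{\ell m}$-sized family once we expose the relevant edges; more cleanly, one can first fix $W$, expose $G(n,p)[W \cup \{\text{anchors}\}]$, and observe that for fixed $u,v$ and remaining available set of size $\ge n/3$, the probability that no such path exists is at most $\exp(-\Omega((\log n)^{\ell-2})) \le \exp(-\Omega(\log n))$ — actually a bit more care is needed for $\ell=3$, where one uses that $u$ and $v$ each have $\Omega(np) = \Omega(\log n)$ neighbours in the available part of $W$ and these neighbourhoods intersect a.a.s. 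I would make the union bound work by revealing edges pair-by-pair in a fixed order and noting the number of pairs processed is only $m \le n$, so a failure probability of $n^{-\omega(1)}$ per pair suffices; alternatively, invoke a standard absorbing-free "matchings in random bipartite-type structures" lemma.

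The main obstacle I anticipate is making the iterative path-finding genuinely robust: each path we add removes $\ell-2$ vertices from the pool, and we must guarantee that the connection property survives all $m$ rounds of deletions. The clean way around this is to prove the connection statement \emph{quantified over all small forbidden sets at once} (which the $p \ge C\log n/n$ bound comfortably affords, since $(\log n)^{\ell - 2}$ dwarfs $\ell m \log n / (\log n)$ only when... — here one must check the arithmetic, but with the generous $m \le n/(64\ell^2)$ the union bound over forbidden sets of size $\le \ell m$ contributes a factor $\binom{n}{\ell m} \le (en/(\ell m))^{\ell m}$, and choosing $C$ large makes $\exp(-c C \log n)$ per (pair, forbidden set) win). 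Once that "universal connectivity" event holds a.a.s., the greedy construction never gets stuck, and we are done. For $\ell = 3$ the same scheme works with "$1$-vertex path" $=$ "common neighbour", so the only genuine edge case is bookkeeping the constant $C$; all the remaining steps are routine.
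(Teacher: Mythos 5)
Your construction hinges on a ``universal connectivity'' claim that is false at the density $p = C\log n/n$: for two \emph{prescribed} vertices $u,v$, the expected number of paths from $u$ to $v$ with $\ell-2$ internal vertices all of whose $\ell-1$ edges lie in $G(n,p)$ is of order $n^{\ell-2}p^{\ell-1} = (np)^{\ell-1}/n = (C\log n)^{\ell-1}/n = o(1)$, so a.a.s.\ a fixed pair has \emph{no} such connecting path (your intermediate estimate ``$\gg (np)^{\ell-2}/n \gg (\log n)^{\ell-2}$'' is an arithmetic slip; $(np)^{\ell-2}/n \to 0$). The same failure occurs for $\ell=3$: the two neighbourhoods you invoke have size $\Theta(\log n)$ inside a ground set of size $\Theta(n)$, so they intersect with probability $O((\log n)^2/n)=o(1)$, not a.a.s. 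No amount of care with the order of exposure or with union bounds over forbidden sets can repair this, because the per-pair success probability is $o(1)$ rather than $1-n^{-\omega(1)}$; even summed over your $m$ matching edges the expected number of pairs that close into cycles is $m\cdot O((\log n)^{\ell-1}/n)\ll m$ once $m$ grows beyond polylogarithmic size. So the greedy scheme gets stuck essentially immediately, and the theorem does not follow.

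This is exactly the difficulty the paper's proof is organised around. At density $\log n/n$ the random graph can only be used in one of two ways: (a) find an $(\ell-1)$-vertex path entirely inside a \emph{linear-sized} set (Janson), which is why vertices of degree $\Omega(n)$ are handled greedily and separately; or (b) aggregate over many disjoint host structures so that the total success probability is driven by a sum, not by a single pair. Concretely, for $\log^\ell n \le m \le M\sqrt n$ the paper takes a family of vertex-disjoint stars in $G$ with $\sum_K g_K^2 = \Omega(nm)$ (Lemma~\ref{lem:manystars}), so that a.a.s.\ at least $m$ stars acquire the needed random edge/expanded path among their leaves; for $m \ge M\sqrt n$ it uses Tur\'an-type bounds (even $\ell$, cycles found in $G$ alone) or dependent random choice together with $t$ rounds of exposure at probability $q = C\log n/m^2$ (odd $\ell$); and for $m < \log^\ell n$ the cycles come from $G(n,p)$ alone. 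Your single-matching-plus-random-connection approach bypasses all of this structure, but it needs $p$ polynomially larger than $\log n/n$ (roughly $p \gg n^{-(\ell-2)/(\ell-1)}$) to be viable, so it cannot prove the stated result.
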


For this result we are not aware of a construction that justifies the $\log n$-term and it
would be interesting to know if it can be omitted.
Also, we did not optimise the upper bound on $m$ stated in Theorem~\ref{thm:sublinear}, as Theorem~\ref{thm:non-extremal} and~\ref{thm:extremal}  cover anyway the cases of larger values of $m$ and $p \ge C \log n/n$.

We remark that, in the randomly perturbed graph setting, several variations of the problem we investigated can be considered.
For example, given an integer $\ell \ge 3$, $0 < \delta \le 1/\ell$ and $\alpha \in (0,1)$, one can ask for the threshold for the property
that the randomly perturbed graph $G_\alpha \cup G(n,p)$ contains $\delta n$
pairwise vertex-disjoint copies of $C_\ell$, for any $n$-vertex graph $G_\alpha$ with minimum degree at least $\alpha n$.
The case $\delta=1/\ell$ corresponds to cycle-factors and has been the core of our work, so we now focus on the case $0 < \delta < 1/\ell$.
It is an easy corollary of our Theorem~\ref{thm:non-extremal} that, given an integer $\ell \ge 3$, $0 < \delta < 1/\ell$ , $\eps>0$, 
and any $n$-vertex graph $G$ with minimum degree $\delta(G) \ge \delta n$, we can a.a.s.~find $(\delta - \eps)n$ pairwise vertex-disjoint copies of $C_\ell$
in $G \cup G(n,p)$ provided $p \ge C/n$, where $C$ is a large enough constant 
depending only on $\ell$  and $\eps$.
In other words we get the following.
\begin{corollary}
	\label{thm:generel}
	Given any integer $\ell \ge 3$, $\eps>0$, and $0<\delta<1/\ell$, there exists $C>0$ such that for any
	$n$-vertex graph $G$ with minimum degree $\delta(G) \ge (\delta+\eps)n$ we can a.a.s.~find $\delta n$ pairwise disjoint copies of $C_\ell$ in $G \cup G(n,p)$, provided that $p\ge C /n$.
\end{corollary}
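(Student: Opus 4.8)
The plan is to deduce Corollary~\ref{thm:generel} directly from the Stability Theorem (Theorem~\ref{thm:non-extremal}). The key observation is that if we aim for slightly fewer than the maximal number of cycles, then the minimum degree hypothesis already forbids $G$ from being $(\alpha,\beta)$-stable, so the stable case of Theorem~\ref{thm:non-extremal} never occurs and nothing further has to be proved.

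First I would isolate the intermediate statement mentioned in the text: \emph{for every $\ell\ge 3$, every $0<\delta<1/\ell$ and every $\eps>0$ there is $C>0$ such that if $\delta(G)\ge\delta n$ then a.a.s.\ $G\cup G(n,p)$ contains at least $(\delta-\eps)n$ pairwise vertex-disjoint copies of $C_\ell$, provided $p\ge C/n$.} Granting this, the corollary follows at once: given $\ell,\eps,\delta$ as in the corollary we may assume $\delta+\eps<1/\ell$ (otherwise replace $\eps$ by $\eps':=(1/\ell-\delta)/2\in(0,\eps)$, which only weakens the hypothesis $\delta(G)\ge(\delta+\eps)n$ and leaves the conclusion unchanged), and then apply the intermediate statement with $\delta_0:=\delta+\eps<1/\ell$ in place of $\delta$ and with $\eps$ unchanged; this produces at least $(\delta_0-\eps)n=\delta n$ vertex-disjoint copies of $C_\ell$, as required. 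In proving the intermediate statement we may also assume $0<\eps<\delta$ (for $\eps\ge\delta$ there is nothing to show, or one replaces $\eps$ by $\delta/2$).

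To prove the intermediate statement I would set $\alpha^\ast:=\delta-\eps/2$, so $0<\alpha^\ast<\delta\le 1/\ell$, and choose $\beta:=\min\{\eps/16,\ \alpha^\ast/4,\ 1/(8\ell)\}>0$; note $0<\beta<1/(4\ell)$ and $4\beta\le\alpha^\ast\le1/\ell$. Apply Theorem~\ref{thm:non-extremal} with this $\ell$ and this $\beta$ to obtain $\gamma,C>0$, and use it with the choice $\alpha=\alpha^\ast$. Since $\delta(G)\ge\delta n>\alpha^\ast n\ge(\alpha^\ast-\gamma)n$, the only thing to check is that $G$ is \emph{not} $(\alpha^\ast,\beta)$-stable; the conclusion of Theorem~\ref{thm:non-extremal} then yields, for $n$ large, at least $\min\{\lfloor\alpha^\ast n\rfloor,\lfloor n/\ell\rfloor\}=\lfloor\alpha^\ast n\rfloor\ge(\delta-\eps)n$ vertex-disjoint copies of $C_\ell$ (here $\alpha^\ast<1/\ell$ by a fixed gap forces $\alpha^\ast n<\lfloor n/\ell\rfloor$ once $n$ is large). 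Suppose then, for contradiction, that $G$ is $(\alpha^\ast,\beta)$-stable, with a partition $V(G)=A\cup B$ as in Definition~\ref{def:stability}. Then $|A|\le(\alpha^\ast+\beta)n$, $|B|\ge(1-\alpha^\ast-\beta)n> n/2$ for $n$ large, and $e(G[B])\le\beta n^2$, so $\sum_{b\in B}\deg_B(b)=2e(G[B])\le2\beta n^2$ and hence at most $2\beta n^2/(\eps n/4)=8\beta n/\eps\le n/2<|B|$ vertices of $B$ have at least $\eps n/4$ neighbours inside $B$. Pick $b\in B$ with $\deg_B(b)<\eps n/4$; then $\deg_A(b)\ge\deg_G(b)-\eps n/4\ge\delta n-\eps n/4$, while $\deg_A(b)\le|A|\le(\alpha^\ast+\beta)n=(\delta-\eps/2+\beta)n$, giving $\eps/4\le\beta$, contradicting $\beta\le\eps/16$. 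Thus $G$ is not $(\alpha^\ast,\beta)$-stable, which completes the argument.

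I do not expect a genuine obstacle here: the content is entirely routine once one has the idea of aiming for density $\alpha^\ast=\delta-\eps/2$ strictly below $\delta$, since this makes the stable alternative in Theorem~\ref{thm:non-extremal} incompatible with $\delta(G)\ge\delta n$ (a stable partition would force $|A|$ essentially equal to $\alpha^\ast n$, yet the almost-independence of $B$ together with the minimum degree condition forces some vertex of $B$ to have about $\delta n>\alpha^\ast n$ neighbours in $A$). The only care required is the bookkeeping of the parameters $\eps,\beta,\gamma,C$ and the harmless floor/ceiling adjustments valid for large $n$; this is exactly why the statement is an "easy corollary" of Theorem~\ref{thm:non-extremal}.
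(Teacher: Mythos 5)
Your proposal is correct and follows the route the paper intends: deduce the corollary from Theorem~\ref{thm:non-extremal} by applying it with a target density $\alpha^\ast$ strictly below the minimum-degree constant, and rule out $(\alpha^\ast,\beta)$-stability via the near-independence of $B$ together with $|A|\le(\alpha^\ast+\beta)n$, which would force a vertex of $B$ to exceed $|A|$ neighbours in $A$. The paper leaves these routine checks implicit ("easy corollary"), and your parameter bookkeeping and the non-stability verification fill them in correctly.
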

This gives a lower bound on the threshold for any $\alpha > \delta$; moreover it is optimal for $\alpha < \tfrac{\ell \delta}{2}$ when $\ell$ is even and $\alpha<\tfrac{1+\delta}{2}$ when $\ell$ is odd (see the discussion after Theorem~\ref{thm:main} for the explanations of these bounds).
When $\alpha=\delta$, the threshold is $\log n/n$ as discussed in the first part of Theorem~\ref{thm:zero}.
When $0 \le \alpha < \delta$, the deterministic graph does not help and the threshold in $G(n,p)$ was determined by Ruci\'{n}ski~\cite{rucinski1992matching}.

Theorem~\ref{thm:main} follows from
Theorem~\ref{thm:non-extremal},~\ref{thm:extremal}, and~\ref{thm:sublinear}. The
proofs of Theorem~\ref{thm:non-extremal} and~\ref{thm:extremal} closely follow
the corresponding proof for a triangle-factor in~\cite{triangle_paper}, once all
lemmas are adjusted to the cycle setting. Therefore, we will only sketch their
proofs in Section~\ref{sec:overview}, together with a precise statement of each
lemma, and we refer the reader to~\cite{triangle_paper} for more details. It is
not hard to derive the new lemmas from the corresponding ones
in~\cite{triangle_paper}, and we skip their proof. Theorem~\ref{thm:sublinear}
requires new ideas, thus we will give a full proof in
Section~\ref{sec:sublinear} and Section~\ref{sec:appendix}.

\section{Sketch of the proofs of the Stability and Extremal Theorems}
\label{sec:overview}

For simplicity we assume $\alpha=1/\ell$ and that $G$ is an $n$-vertex graph with minimum degree $\delta(G) \ge n/\ell$, with $n$ being a multiple of $\ell$, in which case both Theorems~\ref{thm:non-extremal} and~\ref{thm:extremal}, give a $C_\ell$-factor in $G \cup G(n,p)$.
The proofs for smaller $\alpha$ follow along the same lines.
As $\ell$ is fixed, throughout all the section when we say \emph{cycle}, this always refers to a cycle of length $\ell$.
We will use the Szemer\'edi regularity lemma for Theorem~\ref{thm:non-extremal} and the concepts of regular and super-regular pairs in both proofs.
We use the degree form of the regularity lemma in~\cite{simon96}, and more details can be found there.

\subsection{Extremal Theorem.} 

Let $0<\beta \ll \eps \ll d \ll 1$ and $C>0$ be such that the following holds. 
Let $G$ be a $(1/\ell,\beta)$-stable graph on $n$ vertices with $\ell | n$ and $\delta(G) \ge n/\ell$.
To cover vertices with cycles we will repeatedly use that in any set of size $\beta n$ there is a path on $\ell-1$ vertices with edges of $G(n,p)$.
This holds a.a.s.~with $p \ge C \log n/n$ using a standard application of Janson's inequality.

As $G$ is $(1/\ell,\beta)$-stable, there exists a partition of $V(G)$ into $A \cup B$ where $|A|=(1/\ell \pm \beta) n$ and $|B|=(1-1/\ell \pm \beta) n$ such that all conditions in Definition~\ref{def:stability} are satisfied.
We can find a cycle factor in $G \cup G(n,p)$ in three steps.
Firstly, we find a collection of disjoint cycles $\cF_1$, such that after removing the cycles of $\cF_1$, we are left with two sets $A_1:=A \setminus V(\cF_1)$ and $B_1:=B \setminus V(\cF_1)$ such that $|B_1| = (\ell -1) |A_1|$.
The way we find these cycles depends on the size of $A$ and $B$.
If $|B| \ge (\ell-1)n/\ell$, then $|B| =(\ell-1) n/\ell+m$ for some $0 \le m \le \beta n$ and we have to find $m$ disjoint cycles entirely within $B$, just using the minimum degree $\delta(G[B]) \ge n/\ell-|A|=m$ and random edges.
This can be done using our Theorem~\ref{thm:sublinear}, and we let $\cF_1$ be the family of disjoint cycles we get.
Otherwise $|B| < (\ell-1)n/\ell$ and $|A|=n/\ell+m$ for some $1 \le m \le \beta n$, and we let $\cF_1$ be any family of $m/(\ell-2)$ disjoint cycles in $G \cup G(n,p)$ each with $\ell-1$ vertices in $A$ and one vertex in $B$.
Such a family can be found greedily: indeed during the process, there is always a vertex $v$ in $B$, not yet contained in a cycle, with at least $d(v,A) - (\ell -1)m/(\ell-2) \ge \beta n$ uncovered neighbours in $A$ in the graph $G$ and thus there is a path on $\ell-1$ vertices in its neighbourhood in $G(n,p)$, that completes to a cycle in $G \cup G(n,p)$.
Notice that the minimum degree of $G[A_1,B_1]$ is still linear in $n$ and all but few vertices of $A_1$ and $B_1$ have high degree to the other part (in fact they see all but few vertices in the other part).

In the second step we want to cover those vertices in $A_1$ and $B_1$ that do not see all but $10\beta n$ vertices from the other side.
We will cover them (and some other vertices) with two collections of cycles $\cF_2$ and $\cF_3$ respectively, where each cycle has one vertex in $A_1$ and $(\ell-1)$ vertices in $B_1$ so that we still have $|B_2| = (\ell-1) |A_2|$, where $A_2:=A_1 \setminus V(\cF_2 \cup \cF_3)$ and $B_2:=B_1 \setminus V(\cF_2 \cup \cF_3)$. 
That can be done greedily as above, just using the minimum degree condition in $G[A_1,B_1]$ and random edges, because there are only few vertices that do not have high degree.
Notice that after this each vertex from $A_2$ and $B_2$ sees all but $10 \beta n$ vertices from the other side, because we covered all vertices of smaller degree, and these sets are still large, because we only removed few cycles.

Finally we split $B_2$ arbitrarily into $\ell-1$ subsets $B_2^i$ of equal size, for $i=1, \dots, \ell-1$, and we remark that $|A_2| =|B_2^1|=\dots=|B_2^{\ell-1}|$. 
It is straightforward to check that $(A_2,B_2^1)$ and $(A_2,B_2^{\ell-1})$ are $(\eps,d)$-super-regular pairs, as each vertex has large degree to the other part.
Using random edges between $B_2^i$ and $B_2^{i+1}$ for $0 <  i <  \ell-1$ and the following Lemma, we can cover $A_2 \cup B_2^1 \cup \dots \cup B_2^{\ell-1}$ with a cycle factor $\cF_4$.

\begin{lemma}
	\label{lem:tripartite}
	For any $0<d<1$ there exists an $\eps>0$ and a $C>0$ such that the following holds.
	Let $\ell \ge 3$ be an integer and $V,U_1,U_2,\dots,U_{\ell-1}$ be sets of size $n$ such that $(V,U_1)$ and $(V,U_{\ell-1})$ are $(\varepsilon,d)$-super regular pairs, and for each $0<i<\ell-1$ let $G(U_i,U_{i+1},p)$ be a random bipartite graph with $p\ge C \log n/n$ .
	Then a.a.s.~there exists a $C_\ell$-factor.
\end{lemma}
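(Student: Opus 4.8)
The plan is to decouple the two super-regular pairs from the sparse random bipartite graphs, reducing the whole statement to the existence of a single perfect matching in a random sparsification of a fixed super-regular pair. First I would fix, independently of the random graph $G:=G(U_1,U_2,p)$, a perfect matching $M_0$ of the super-regular pair $(V,U_1)$ --- this exists because an $(\varepsilon,d)$-super-regular pair with $\varepsilon$ small compared to $d$ satisfies Hall's condition --- and write $a\colon V\to U_1$ for the associated bijection; and, using that $p\ge C\log n/n$ lies well above the perfect-matching threshold, I would fix perfect matchings $N_i\subseteq G(U_i,U_{i+1},p)$ for $2\le i\le \ell-2$, which a.a.s.\ exist and which together decompose $U_2\cup\dots\cup U_{\ell-1}$ into $n$ vertex-disjoint paths, one from each $u_2\in U_2$ to $\psi(u_2)\in U_{\ell-1}$, where $\psi\colon U_2\to U_{\ell-1}$ is their composition (for $\ell=3$ there are no such matchings and $\psi=\mathrm{id}$). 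None of these choices touches the edges of $G$.

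Given all of this, it suffices to find a perfect matching in the auxiliary bipartite graph $\Lambda:=\Lambda'\cap G$ on $U_1\sqcup U_2$, where $u_1\sim_{\Lambda'}u_2$ precisely when $a^{-1}(u_1)\,\psi(u_2)\in E(V,U_{\ell-1})$. Indeed, from a matching edge $u_1u_2$ of $\Lambda$, setting $v:=a^{-1}(u_1)$, one reads off a copy of $C_\ell$ on $v,u_1,u_2,\dots,\psi(u_2)$ --- the edge $vu_1$ from $M_0$, the edge $u_1u_2$ from $G$, the interior path from $u_2$ to $\psi(u_2)$ from the matchings $N_i$, and the edge $\psi(u_2)v$ from $E(V,U_{\ell-1})$ by the definition of $\Lambda'$ --- and as $v$ ranges over $V$ these copies are pairwise vertex-disjoint and exhaust $V\cup U_1\cup\dots\cup U_{\ell-1}$. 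Now $\Lambda'$ is isomorphic to $(V,U_{\ell-1})$ via the bijections $a^{-1}\colon U_1\to V$ and $\psi\colon U_2\to U_{\ell-1}$, hence is itself $(\varepsilon,d)$-super-regular; and it is determined by $(V,U_{\ell-1})$, $M_0$ and the $N_i$, all of which are independent of $G$. So, conditioning on $M_0$ and the $N_i$, the lemma reduces to the following standalone claim: if $\Lambda'$ is a fixed $(\varepsilon,d)$-super-regular pair with parts of size $n$ and $G$ is an independent copy of $G(U_1,U_2,p)$ with $p\ge C\log n/n$, then a.a.s.\ $\Lambda'\cap G$ has a perfect matching.

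To prove this claim I would verify Hall's condition for $\Lambda'\cap G$ on the $U_1$-side --- which suffices since the parts are balanced --- by splitting on the size of a candidate violator $S\subseteq U_1$. A Chernoff bound shows that a.a.s.\ every vertex has $\Lambda'\cap G$-degree $\Omega(dC\log n)$, settling all $S$ with $|S|=O(\log n)$ (and excluding isolated vertices). For a minimal violator $S$ with $|S|\le (d-\varepsilon)n/2$ I would invoke the usual structure --- $\Lambda'\cap G$ restricted to $S\cup N_{\Lambda'\cap G}(S)$ is connected, while all $\ge\tfrac12(d-\varepsilon)n\,|S|$ edges of $\Lambda'$ leaving $S$ are absent from $G$ --- so that the probability such an $S$ exists is at most $\sum_S (2C\log n)^{2|S|}\cdot n^{-\Omega(dC|S|)}$, which is $o(1)$ once $C$ is large relative to $1/d$, since the $\binom{n}{|S|}^2$ choices of $(S,N(S))$ are swamped by the last factor. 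Finally, for $S$ in the linear range $\varepsilon n\le|S|\le n$, I would use $\varepsilon$-regularity (for $|S|\le(1-2\varepsilon)n$) respectively the minimum degree of $\Lambda'$ (for larger $|S|$) to see that all but at most $\varepsilon n$, respectively all, vertices of $U_2$ have $\Omega(dn)$ neighbours in $S$ in $\Lambda'$; since the edges of $G$ at distinct vertices of $U_2$ are independent, the number of vertices of $U_2$ outside $N_{\Lambda'\cap G}(S)$ is then dominated by a binomial with mean $o(1)$, whose upper tail decays like $n^{-\Omega(n)}$ and so beats the $2^n$ choices of $S$.

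The crux is exactly this last, linear range: there the probability that a single set $S$ violates Hall's condition is only polynomially small in $n$, so a naive union bound over all $S$ fails, and one genuinely needs the independence of the edges of $G$ at distinct vertices of $U_2$ together with an exponential, rather than a first-moment, tail bound. Everything else --- fixing $M_0$ and the interior matchings, the isomorphism bookkeeping, and reassembling the $C_\ell$-copies --- is routine, and the standalone claim is moreover closely related to, and can alternatively be deduced from, known results on perfect matchings in random subgraphs of (pseudo)random bipartite graphs.
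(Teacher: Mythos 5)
Your argument is correct in outline and would prove the lemma; note, though, that this paper never proves Lemma~\ref{lem:tripartite} itself --- it is stated and said to follow by adjusting the corresponding triangle-factor lemma of~\cite{triangle_paper}, where the interior random bipartite graphs are absorbed essentially as you do, by fixing a.a.s.\ perfect matchings in $G(U_i,U_{i+1},p)$ for $2\le i\le\ell-2$ and contracting the resulting paths, so that only the ``tripartite'' core $(V,U_1,U_{\ell-1})$ remains. Where you genuinely depart from that template is in the core step: rather than handling the tripartite structure as in~\cite{triangle_paper}, you additionally fix a perfect matching $M_0$ of the super-regular pair $(V,U_1)$ (legitimate: super-regularity gives Hall's condition for $\eps$ small relative to $d$) and thereby reduce the whole lemma to a single statement --- a fixed $(\eps,d)$-super-regular balanced pair $\Lambda'$, intersected with an independent $G(U_1,U_2,p)$ at $p\ge C\log n/n$, a.a.s.\ has a perfect matching --- which is true and standard. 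This buys a short, self-contained proof with $\eps$ and $C$ depending on $d$ only (as the statement requires, since the interior matchings need only an absolute constant and $\ell$ is fixed), at the price of re-proving a Hall-type expansion argument that the paper simply imports. Two details of your Hall verification deserve tightening. In the intermediate range the plain union bound over pairs $(S,T)$ with $N_{\Lambda'\cap G}(S)\subseteq T$, $|T|=|S|-1$, already works because $\Lambda'$ has linear minimum degree (so at least $\tfrac12(d-\eps)n|S|$ $\Lambda'$-edges from $S$ must be missing from $G$), and is preferable to the connected-structure/maximum-degree enumeration you sketch, which mixes a conditioning on $\Delta(\Lambda'\cap G)$ into an independence calculation. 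In the top range, when $n-|S|$ is small, the slogan ``$n^{-\Omega(n)}$ tail beats the $2^n$ choices of $S$'' is not the right accounting, since for such $S$ the failure probability is only polynomially small; but there are then only $\binom{n}{n-|S|}$ such sets, and a union bound over $S$ together with the $(n-|S|+1)$-tuples of failing $U_2$-vertices (using exactly the vertex-wise independence you invoke), or the standard device of checking Hall on both sides only for sets of size at most $n/2$, closes this case. With those routine repairs your proof is complete.
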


We conclude by observing that the collection of cycles $\cF_1 \cup \cF_2 \cup \cF_3 \cup \cF_4$ gives a $C_\ell$-factor in $G \cup G(n,p)$.

\subsection{Stability Theorem.} 

Let $0<\eps \ll \gamma \ll  d \ll \beta < 1/(4\ell)$ and $C>0$.
Let $G$ be a graph on $n$ vertices with $\ell |n$ and $\delta(G) \ge (1/\ell - \gamma) n$ that is not $(1/\ell,\beta)$-stable.
We apply the regularity lemma to $G$ and obtain the reduced graph $R$, whose vertices are the clusters and there is an edge between two clusters if they give an $(\eps,d)$-regular pair in $G$.
By adapting ideas from~\cite{BMS_cover}, we proved the following stability result in~\cite{triangle_paper}.

\begin{lemma}[Lemma~4.4 in~\cite{triangle_paper}]
	\label{lem:stable_cluster}
	For any $0<\beta<\tfrac{1}{12}$ there exists a $d>0$ such that the following holds for any $0<\eps<d/4$, $4 \beta \le \alpha \le \tfrac 13$, and $t \ge \tfrac{10}{d}$ .
	Let $G$ be an $n$ vertex graph with minimum degree $\delta(G) \ge (\alpha -\tfrac12 d)n$ that is not $(\alpha,\beta)$-stable and let $R$ be the $(\eps,d)$-reduced graph for some $(\eps,d)$-regular partition $V_0,\dots,V_t$ of $G$.
	Then $R$ contains a matching $M$ of size $(\alpha+2d)t$.
\end{lemma}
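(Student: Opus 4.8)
The plan is to prove the contrapositive: assuming that $R$ has no matching of size $(\alpha+2d)t$, I would exhibit a partition of $V(G)$ witnessing that $G$ is $(\alpha,\beta)$-stable. Throughout, let $N=(n-|V_0|)/t$ be the common cluster size, and recall that $\eps<d/4$ and that $d$ may be chosen small in terms of~$\beta$.

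First I would pass the degree condition to~$R$. In the degree form of the regularity lemma one works with a spanning subgraph $G'\subseteq G$ satisfying $\deg_{G'}(v)>\deg_G(v)-(d+\eps)n$ for all~$v$, in which clusters are independent and every pair of clusters is $\eps$-regular of density $0$ or more than~$d$. Since the $G'$-neighbours of~$v$ lie only in clusters adjacent in~$R$ to the cluster of~$v$, and clusters have size $N\le n/t$, one gets $\delta(R)\ge\big(\alpha-\tfrac12 d-(d+\eps)\big)t\ge(\alpha-2d)t$.

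For the combinatorial core, suppose $R$ has no matching of size $(\alpha+2d)t$. By Berge--Tutte/Gallai--Edmonds, fix $S\subseteq V(R)$ maximising $q-|S|$, where $q$ denotes the number of odd components of $R-S$; then the maximum matching of~$R$ has size $\tfrac12(t-q+|S|)$, so $q\ge|S|+(1-2\alpha-4d)t$. As the number of components of $R-S$ is at most $t-|S|$, this already forces $|S|\le(\alpha+2d)t$. Conversely, every vertex of a component $K$ of $R-S$ has all of its $\ge(\alpha-2d)t$ neighbours inside $K\cup S$, so $|K|\ge(\alpha-2d)t-|S|+1$; since there are at least $(1/3-4d)t$ odd components (here I use $\alpha\le 1/3$ to keep this a fixed positive fraction of~$t$) and their sizes sum to at most~$t$, a short computation gives $|S|\ge(\alpha-3d)t$, hence $|S|=\alpha t\pm3dt$ and $q\ge(1-\alpha-7d)t$. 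Writing $q_1$ for the number of singleton components and using $2q-q_1\le\sum_K|K|\le t-|S|$, I obtain $q_1\ge(1-\alpha-17d)t$. Thus $R$ contains an independent set $\cT$ of $\ge(1-\alpha-17d)t$ clusters, each with all of its $\ge(\alpha-2d)t$ neighbours inside~$S$, while at most $20dt$ clusters lie outside $S\cup\cT$.

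Finally I would lift this to~$G$. Put $A:=\bigcup_{V_i\in S}V_i$ and $B:=V(G)\setminus A$, so that $|A|\in(\alpha\pm\beta)n$ and $|B|\in(1-\alpha\pm\beta)n$. Since the clusters of~$\cT$ are pairwise non-adjacent in~$R$, any two of them span at most $dN^2$ edges of~$G$; combined with the negligible contribution of edges inside clusters, of~$V_0$, and of the $\le 20dt$ leftover clusters, this shows that $G[B]$ has at most $\beta n^2$ edges. Moreover each vertex~$u$ in a cluster of~$\cT$ has all but at most $(d+\eps)n$ of its $\ge\delta(G)$ neighbours inside~$A$, hence sees all but $\le\beta n$ vertices of~$A$; and the at most $\beta n$ vertices of~$B$ lying outside clusters of~$\cT$ serve as the allowed exceptions for~$B$. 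The delicate step, which I expect to be the main obstacle, is verifying the two conditions involving~$A$: that all but $\beta n$ vertices of~$A$ have degree $\ge|B|-\beta n$ into~$B$, and that $\delta\big(G[A,B]\big)\ge\alpha n/4$. A short double-counting of the $S$--$\cT$ non-edges (valid when $d$ is small relative to~$\beta^2$) shows that all but $\le\beta t$ clusters of~$S$ are adjacent in~$R$ to almost every cluster of~$\cT$; passing from this cluster-level adjacency to almost-complete neighbourhoods of individual $A$-vertices into~$B$ is what requires care, and I would handle it by first cleaning~$G$ (removing a sublinear set of vertices atypical for the bipartite structure and re-running the argument), arguing that if no such cleaning succeeds then the obstructing vertices yield a structure that enlarges the matching of~$R$ past $(\alpha+2d)t$, the desired contradiction.
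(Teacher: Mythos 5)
Since this paper only quotes the lemma from the triangle paper rather than proving it, I assess your argument on its own terms. The reduced-graph half is sound: the minimum degree does pass to $R$ as $\delta(R)\ge(\alpha-2d)t$, and your Berge--Tutte computation correctly produces a set $S$ with $|S|=(\alpha\pm 3d)t$, at least $(1-\alpha-17d)t$ singleton components $\mathcal{T}$ of $R-S$ whose $R$-neighbourhoods lie entirely in $S$, and at most $20dt$ leftover clusters; the size bounds on $A$ and $B$, the bound $e(G[B])\le\beta n^2$ (after replacing your per-pair claim of ``at most $dN^2$ edges of $G$'' between non-adjacent clusters of $\mathcal{T}$ -- which is not what non-adjacency in the degree-form reduced graph gives -- by the global bound coming from $\deg_{G'}(v)>\deg_G(v)-(d+\eps)n$ together with the contribution of $V_0$ and the leftover clusters), and the condition that all but $\beta n$ vertices of $B$ see all but $\beta n$ vertices of $A$ all check out with room to spare when $d$ is small in terms of $\beta$.

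The genuine gap is exactly where you flag it, and neither of your proposed repairs would close it. Adjacency in $R$ only certifies density greater than $d$, hence typical vertices of an $S$-cluster get only about $dN$ neighbours in an adjacent $\mathcal{T}$-cluster; since $d\ll\beta\le\alpha/4$, no double counting of $S$--$\mathcal{T}$ adjacencies can yield either $\delta(G[A,B])\ge\alpha n/4$ or the condition that all but $\beta n$ vertices of $A$ have degree at least $|B|-\beta n$ into $B$. The fallback that ``obstructing vertices enlarge the matching of $R$'' cannot work either: modify $K_{\alpha n,(1-\alpha)n}$ at a single vertex of the small side so that it has no neighbour across the partition -- every regular pair, and hence $R$ and its maximum matching, is unchanged, yet $\delta(G[A,B])=0$ for your partition, so such vertices are invisible at the cluster level and give no contradiction. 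What a complete proof needs here is a vertex-level argument: from $\delta(G)\ge(\alpha-\tfrac12 d)n$, $|A|\le(\alpha+2d)n$ and $e(G[B])=O(d)n^2$, counting degrees of $B$-vertices shows $G[A,B]$ misses only $O(d)n^2$ edges, which gives the ``all but $\beta n$'' condition for $A$ by Markov; and then the at most $O(dn/\beta)$ vertices whose cross-degree is below, say, $\alpha n/3$ must be explicitly relocated to the other side, after which all conditions of Definition~\ref{def:stability} (including the exact threshold $\alpha n/4$ and the bound on $e(G[B])$) have to be re-verified for the amended partition. This counting-plus-relocation step is absent from your proposal, so as written the proof is incomplete.
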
 

Using Lemma~\ref{lem:stable_cluster} and that the reduced graph inherits a minimum degree condition from $G$, we can cover $V(R)$ with pairwise vertex-disjoint stars, each with at most $\ell-1$ leaves, such that there are not too many stars isomorphic to $K_{1,\ell-1}$.
For simplicity, we only want to work with copies of stars isomorphic to $K_{1,1}$ or $K_{1,\ell-1}$ and, for that, we appropriately split each star to get a cover of $V(R)$ with pairwise disjoint copies of $K_{1,1}$ and $K_{1,\ell-1}$, such that there are still not too many copies of $K_{1,\ell-1}$.
For the rest of the section, we call stars (resp. matching edges) the copies of $K_{1,\ell-1}$ (resp. $K_{1,1}$) in $R$.
We make each edge super-regular (both in stars and matching edges) by removing some vertices and adding them to $V_0$, while keeping all clusters of the same size.
Then we remove a few more vertices from all but the centre cluster of each star and add them to $V_0$, in order to make each centre cluster of a star bigger than the other clusters.
Finally, by moving a few more vertices to $V_0$, we can assume that for all stars and matching edges in $R$, the number of vertices in the clusters together is divisible by $\ell$ (and thus $|V_0|$ is divisible by $\ell$ as well).
We can do all this such that $V_0$ does not get too large.

We start by covering $V_0$ with a collection of cycles $\cF_1$.
For this we use that with $p\ge C /n$ we can a.a.s.~assume that we can find short paths in $G(n,p)$ with vertices in predefined sets that are not too small and that for any regular pair we can find a cycle using one of its edges and the other vertices within predefined sets that are not too small using $\ell-1$ edges from $G(n,p)$.
Although covering $V_0$ could be done greedily just using the minimum degree condition and random edges, we do this more carefully in such a way that the total number of vertices in the clusters of each super-regular star or matching edge remains a multiple of $\ell$ (to avoid divisibility issues later), and that none of the centre cluster of a star gets significantly smaller.
Notice this can be guaranteed because we cover $V(R)$ without using not too many stars and, therefore, every vertex from $V_0$ has high degree into the non-centre clusters.
By constructing another collection of cycles $\cF_2$, we modify just the stars to ensure the sizes of the clusters are as required by the following lemma.

\begin{lemma}
	\label{lem:tripartite2}
	For any $0<\delta'\le d<1$ there exist $\delta_0,\delta,\eps$ with $\delta'\ge \delta_0>\delta>\eps>0$ such that given an integer $\ell \ge 3$, there exists $C=C(\ell)$ such that the following holds.
	Let $V,U_1,U_2,\dots,U_{\ell-1}$ be sets of size  $|V|=n$ and $(1-\delta_0) n \le |U_1|=\dots=|U_{\ell-1}| \le (1-\delta)n$, where $|V|+|U_1|+\dots+|U_{\ell-1}| \equiv 0 \pmod \ell$.
	Further for each $1 \le i \le \ell-1$ assume that $(V,U_i)$ is $(\varepsilon,d)$-super regular pair and for each $0<i<\ell-1$ let $G(V,p)$ and $G(U_i,U_{i+1},p)$ be random graphs with $p\ge C/n$.
	Then a.a.s.~there exists a $C_\ell$-factor.
\end{lemma}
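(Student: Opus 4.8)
The plan is to peel off a small family of ``correction'' copies of $C_\ell$ that absorb the size discrepancy $n-|U_1|$, thereby reducing to a perfectly balanced instance which is then tiled by copies winding once around the cluster-path $V,U_1,\dots,U_{\ell-1},V$. Write $m:=|U_1|=\dots=|U_{\ell-1}|$; from $n+(\ell-1)m\equiv 0\pmod\ell$ and $(\ell-1)m\equiv -m\pmod\ell$ we get $\ell\mid(n-m)$. The discrepancy $n-m$ will be absorbed by $O(n-m)$ correction copies, each built from a short path of $G(V,p)$ inside~$V$ and closed into a copy of $C_\ell$ through one or both of the super-regular pairs (possibly threading a few vertices along the random bipartite path); by distributing suitably how these correction copies meet the clusters $U_i$, one arranges that every $U_i$ is still used exactly $m$ times, so that after removing them one is left with equal-sized sets. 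For $\ell=3$ this is exactly the decomposition behind the triangle analogue in~\cite{triangle_paper}.

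I would first reserve, inside~$V$ and inside each~$U_i$, small random-looking ``reservoir'' subsets of size a tiny constant times $m$, chosen so that $(V,U_1)$ and $(V,U_{\ell-1})$ remain super-regular after their deletion and so that the reservoirs still behave like typical random subsets for $G(V,p)$ and the $G(U_i,U_{i+1},p)$. Step~1 then extracts the $O(n-m)$ correction copies one by one, greedily, off the reservoirs, using the two facts (available a.a.s.\ already for $p\ge C/n$, and used repeatedly in the Stability Theorem) that $G(V,p)$ contains short paths inside any prescribed linear-sized subset of~$V$ and enough pairwise disjoint ones for a greedy extraction, and that an edge of a super-regular pair can a.a.s.\ be completed to a copy of $C_\ell$ using $\ell-1$ random edges inside prescribed linear-sized subsets. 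After Step~1 the leftover sets $V',U_1',\dots,U_{\ell-1}'$ all have the same size, the two distinguished pairs are still super-regular, and the reservoirs are untouched.

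Step~2 tiles the balanced leftover minus the reservoirs by transversal copies: repeatedly pick an uncovered $u_1\in U_1'$, use super-regularity of $(V',U_1')$ to find an uncovered common partner $v\in V'$, and extend $vu_1$ to a copy of $C_\ell$ running through $u_2,\dots,u_{\ell-1}$ and back to~$v$ via the random bipartite edges and super-regularity of $(V',U_{\ell-1}')$; one stops once a small fraction of every cluster is still uncovered, which keeps enough common neighbours available throughout. Step~3 absorbs those remaining vertices, a small fraction of each cluster, into the reservoirs using the same two tools. Concatenating the correction, transversal and absorbing copies gives the $C_\ell$-factor.

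I expect the hardest part to be Step~1 together with the bookkeeping it imposes: the correction copies must be chosen so that the counts come out right in every~$U_i$, consistently with $\ell\mid(n-m)$, while staying off the reservoirs and not spoiling super-regularity of the two special pairs, and all of this has to go through with only $p\ge C/n$. It is precisely the extra random graph $G(V,p)$ on the larger set~$V$ that supplies enough disjoint short paths to realise the correction copies, and this is what lets one avoid the logarithmic factor that is genuinely needed in the fully balanced, $G(V,p)$-free setting of Lemma~\ref{lem:tripartite}. Steps~2 and~3 are routine once the cover-down and absorption lemmas for super-regular pairs are set up, and they are the $C_\ell$-analogues of the corresponding arguments in~\cite{triangle_paper}.
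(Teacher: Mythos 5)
For $\ell=3$ your outline is essentially the argument of \cite{triangle_paper} and works, but for every $\ell\ge 4$ Step~1 --- the family of ``correction'' copies that is supposed to equalise the cluster sizes --- cannot be realised from the edges you allow yourself, and this is exactly where the cycle case differs from the triangle case. In the stated structure a vertex of $V$ has neighbours only in $V\cup U_1\cup U_{\ell-1}$, so in any copy of $C_\ell$ containing at least two vertices of $V$ the at most $\ell-2$ remaining vertices split into paths inside the chain $U_1-U_2-\dots-U_{\ell-1}$ whose endpoints lie in $U_1\cup U_{\ell-1}$. Such a path reaching $U_{\lfloor \ell/2\rfloor}$ needs at least $\ell-1$ vertices when $\ell$ is even (so a correction copy can never meet $U_{\ell/2}$), and for odd $\ell$ the only possibility is the path $U_1,U_2,\dots,U_{(\ell-1)/2},\dots,U_2,U_1$, which meets $U_1$ twice. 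Consequently, over your whole correction family the number of vertices used in $U_1$ is at least twice the number used in $U_{\lfloor\ell/2\rfloor}$; if, as your reduction requires, every $U_i$ is used equally often, this common number must be $0$, i.e.\ all correction copies would have to lie entirely inside $V$ --- and $G(V,C/n)$ a.a.s.\ contains only $o(n)$ copies of $C_\ell$ altogether, far fewer than the $(n-m)/\ell\ge \delta n/\ell$ you need. So the balanced instance is never reached, Steps~2--3 never start, and ``threading a few vertices along the random bipartite path'' cannot help, because the obstruction is precisely about which clusters such threads can reach.

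In fact this counting shows more: for even $\ell$ the copies through $U_{\ell/2}$ each contain exactly one vertex of $V$ and together exhaust $\bigcup_{j\ne \ell/2}U_j$, leaving $n-m$ vertices of $V$ to be tiled inside $G(V,p)$ alone (and an enumeration for $\ell=5$ gives the same conclusion), so no proof can use only the edges listed in the statement; the lemma must be read with the further random edges that are present in the actual application, where $G(n,p)$ sits on the whole vertex set --- random edges between $V$ and the middle clusters and between non-consecutive $U_i$'s. Your Step~1 should be rebuilt from gadgets available in that richer setting, for instance a random edge of $G(V,p)$ extended to a cycle that visits every cluster except one prescribed $U_j$ (using one random edge that skips over $U_j$, or enters the chain at $U_2$ or $U_{\ell-2}$ when $j\in\{1,\ell-1\}$); taking $(n-m)/\ell$ such gadgets for each of the $\ell-1$ choices of $j$ removes $2(\ell-1)(n-m)/\ell$ vertices from $V$ and $(\ell-2)(n-m)/\ell$ from each $U_j$, and only then is your transversal tiling plus absorption the right continuation. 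Since the paper omits the proof of this lemma and refers to \cite{triangle_paper}, there is no in-paper argument to compare with, but as written your blueprint does not go through for any $\ell\ge 4$.
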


From Lemma~\ref{lem:tripartite2} we also derive the following result about the existence of a cycle factor in a super-regular edge, again with the help of the random edges.

\begin{lemma}
	\label{lem:bipartite}
	For any $0<d<1$ there exist $\eps>0$ and $C>0$ such that the following holds for any integer $\ell \ge 3$ and any sets $U,V$ of size $|V|=n$ and $3n/4 \le |U| \le n$, where $|V|+|U| \equiv 0 \pmod \ell$.
	If $(U,V)$ is an $(\varepsilon,d)$-super-regular pair and $G(U,p)$ and $G(V,p)$ are random graphs with $p\ge C /n$, then a.a.s.~there exists a $C_\ell$-factor.
\end{lemma}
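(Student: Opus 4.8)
The plan is to derive Lemma~\ref{lem:bipartite} from Lemma~\ref{lem:tripartite2} by splitting $U$ and $V$ into clusters that realise two linked copies of the structure appearing in that lemma. A single application of Lemma~\ref{lem:tripartite2} cannot suffice, since its size windows tolerate only an $O(\delta_0 n)$ imbalance between the centre cluster and the side clusters, whereas here $|V|-|U|$ may be of order $n$. Using two copies, one with its centre inside $V$ and one with its centre inside $U$, spreads this imbalance between them.

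Given $d$, I would first apply Lemma~\ref{lem:tripartite2} with parameters chosen so that the $(\eps',d')$-super-regular sub-pairs produced below (for a small $\eps'$ and a density $d'$ slightly below $d$, accounting for the loss under slicing) satisfy its hypotheses; this yields the relevant $\delta_0>\delta>\eps'>0$ and the constant $C$, and I take the $\eps$ of Lemma~\ref{lem:bipartite} small against $\delta$ and $d$. Writing $m:=|U|$, the first task is to fix integers $t_1,u_1,t_2,u_2$, all of order $n$, with $t_1+(\ell-1)u_2=n$, $(\ell-1)u_1+t_2=m$, $(1-\delta_0)t_k\le u_k\le(1-\delta)t_k$ for $k=1,2$, and $t_1+(\ell-1)u_1\equiv 0\pmod\ell$. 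Solving the two equalities with $u_k=t_k$ gives $t_1=\frac{(\ell-1)m-n}{\ell(\ell-2)}$ and $t_2=\frac{(\ell-1)n-m}{\ell(\ell-2)}$, both linear in $n$ with room to spare when $|U|\ge 3n/4$ (this is where that hypothesis enters, uniformly over $\ell\ge 3$). Reinstating the windows perturbs $t_1,t_2$ only by a factor $1\pm O(\delta_0)$, so the admissible pairs $(u_1,u_2)$ form a lattice box of side of order $n$; intersecting it with the congruences modulo $\ell$ and modulo $\ell-1$ — the latter so that the forced values $t_1=n-(\ell-1)u_2$ and $t_2=m-(\ell-1)u_1$ are integral — still leaves a valid choice, for which also $t_1\le n$ and $t_2\le m$.

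Next I would take a uniformly random partition $V=V_1^\ast\sqcup U_1^{(2)}\sqcup\dots\sqcup U_{\ell-1}^{(2)}$ with $|V_1^\ast|=t_1$ and $|U_i^{(2)}|=u_2$, and independently a uniformly random partition $U=V_2^\ast\sqcup U_1^{(1)}\sqcup\dots\sqcup U_{\ell-1}^{(1)}$ with $|V_2^\ast|=t_2$ and $|U_i^{(1)}|=u_1$. As $(U,V)$ is super-regular and every cluster is linear-sized, with high probability the $2(\ell-1)$ sub-pairs $(V_1^\ast,U_i^{(1)})$ and $(V_2^\ast,U_i^{(2)})$ are $(\eps',d')$-super-regular with parameters fitting Lemma~\ref{lem:tripartite2}: regularity passes to linear-sized sub-pairs, and under a random partition every vertex keeps, up to $o(n)$, its proportional degree into each cluster, so the minimum-degree part of super-regularity survives. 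I fix such a partition and then invoke Lemma~\ref{lem:tripartite2} twice. For the first instance the centre is $V_1^\ast$ and the side clusters are $U_1^{(1)},\dots,U_{\ell-1}^{(1)}$; the needed super-regular pairs $(V_1^\ast,U_1^{(1)})$ and $(V_1^\ast,U_{\ell-1}^{(1)})$ are among those guaranteed by the partition, the randomness inside the centre is the restriction of $G(V,p)$, and the bipartite randomness $G(U_i^{(1)},U_{i+1}^{(1)},p)$ for $0<i<\ell-1$ is the restriction of $G(U,p)$. The second instance is its mirror image, with centre $V_2^\ast\subseteq U$, side clusters $U_1^{(2)},\dots,U_{\ell-1}^{(2)}\subseteq V$, and the roles of $G(U,p)$ and $G(V,p)$ exchanged. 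The divisibility conditions hold by the choice of sizes (the second total is $(n+m)-(t_1+(\ell-1)u_1)$), and since $p\ge C/n$ each application a.a.s.\ produces a $C_\ell$-factor of the vertices it covers; the random edges used by the two instances lie on disjoint vertex sets inside $V$ and inside $U$, so the two applications are independent, and the union of the resulting factors is a $C_\ell$-factor of $(U,V)$.

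The genuine work, I expect, is the size arithmetic of the second paragraph: checking that the two narrow size windows of Lemma~\ref{lem:tripartite2} can be satisfied simultaneously for every $m\in[3n/4,n]$ — exactly the point where the hypothesis $|U|\ge 3n/4$ is used — and carrying the several divisibility constraints through the $O(1)$ integer rounding. The inheritance of super-regularity by the randomly chosen sub-clusters is routine, but still needs the short concentration argument sketched above.
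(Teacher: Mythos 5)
Your proof is correct and takes essentially the route the paper intends: Lemma~\ref{lem:bipartite} is obtained from Lemma~\ref{lem:tripartite2} (whose derivation the paper omits, deferring to the triangle paper), and your two mirrored applications of that lemma — with centre sizes near $((\ell-1)m-n)/(\ell(\ell-2))$ inside $V$ and $((\ell-1)n-m)/(\ell(\ell-2))$ inside $U$, the size/divisibility bookkeeping, and the random-partition inheritance of super-regularity — are exactly the natural generalisation of the triangle-case argument. The only (harmless) discrepancy is that your $\eps$ and $C$ inherit a dependence on $\ell$ from Lemma~\ref{lem:tripartite2}, where $C=C(\ell)$, whereas the statement quantifies them before $\ell$; since the lemma is only ever applied for a fixed $\ell$, this does not affect its use.
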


We use Lemma~\ref{lem:tripartite2} on each star and Lemma~\ref{lem:bipartite} on each matching edge to cover the remaining vertices with a collection of cycles $\cF_3$.
Together $\cF_1 \cup \cF_2 \cup \cF_3$ gives a $C_\ell$-factor in $G \cup G(n,p)$.

\section{Proof of the Sublinear Theorem}
\label{sec:sublinear}

Let $\ell \ge 3$ be an integer.
Let $1 \le m \le  \frac{n}{64 \ell^2}$ and $G$ be an $n$-vertex graph with minimum degree $\delta(G) \ge m$.
We let $p \ge C \log n/n$, with $C$ large enough such that a.a.s.~the applications of the propositions we state later hold and we a.a.s.~have the following properties hold in $G(n,p)$:
\begin{enumerate}[label=\rom]
	\item for any set of vertices $U$ of size $n/(64 \ell)$ there is a path on $\ell-1$ vertices in $G(n,p)[U]$ and \label{random_graph:path} 
	\item for a given set of vertices $U$ of size $n/2$ there are at least $\log^\ell n$ pairwise vertex-disjoint $C_\ell$'s in $G(n,p)[U]$. \label{random_graph:cycle} 
\end{enumerate}

Notice that~\ref{random_graph:path} can be guaranteed using the Janson's inequality and the union bound, while~\ref{random_graph:cycle} follows from~\cite[Theorem 3.29]{JLR}.
We want to show that a.a.s.~there exist $m$ pairwise vertex-disjoint copies of $C_\ell$ in $G \cup G(n,p)$.

Any vertex $v$ of large degree in $G$ can easily be covered by a cycle using~\ref{random_graph:path}.
If there are enough of these vertices, we can already claim $m$ cycles, otherwise we first ignore these vertices and cover them later.
For this, let $V'$ be the set of vertices from $G$ of degree at least $\frac{n}{64 \lfloor \ell/2 \rfloor}$.
If $|V'|\ge m$, then we can greedily find $m$ disjoint cycles in $G \cup G(n,p)$, each containing exactly one vertex from $V'$.
Indeed, as long as we have less than $m$ cycles, there is a vertex $v \in V'$ not yet contained in a cycle.
Then there are at least $\frac{n}{64 \lfloor \ell/2 \rfloor}- \ell m \ge \frac{n}{32 \ell} - \frac{n}{64 \ell} \ge \frac{n}{64 \ell}$ vertices $U \subseteq N_G(v)$ not covered by cycles and we can find a path on $\ell-1$ vertices within $G(n,p)[U]$ using property~\ref{random_graph:path}, that gives us a cycle on $\ell$ vertices containing $v$.
Otherwise, $|V'|<m$ and we remove $V'$ from $G$ to obtain $G'=G[V \setminus V']$.
Note that we have $v(G') = n-|V'| \ge n/2$, minimum degree $\delta(G') \ge m-|V'|=m'$, and maximum degree $\Delta(G') < \frac{n}{64 \lfloor \ell/2 \rfloor} \le \frac{v(G')}{32 \lfloor \ell/2 \rfloor}$.

Now the split the proof in three ranges for the value of $m'$:
\[
m' < \log^\ell n, \quad \log^\ell n \le m' \le M \sqrt{v(G')}, \quad \text{and} \quad M \sqrt{v(G')} \le m' \le \frac{n}{64 \ell^2}
\]
where $M$ is the constant given by Propositon~\ref{prop:gesqrt} below with input $\ell$.
If $m' < \log^\ell n$, then we a.a.s.~find $m'$ cycles $C_\ell$'s in $G(n,p)$ using~\ref{random_graph:cycle}.
If $\log^\ell n \le m' \le M \sqrt{v(G')}$, then we also have $\log^\ell v(G') \le \log^\ell n \le m' \le M \sqrt{v(G')}$, so we apply the following proposition to $G'$ with  $\gamma=1/(32 \lfloor \ell/2 \rfloor)$, and a.a.s.~find at least $m'$ vertex-disjoint cycles in $G \cup G(n,p)[V \setminus V']$.

\begin{proposition}
	\label{prop:lesqrt}
	Let $\ell \ge 3$ be an integer.
	For any $M \ge 1$ and $0<\gamma<1/2$ there exists $C>0$ such that for any $\log^\ell n \le m \le M \sqrt{n}$ and any $n$-vertex graph $G$ with maximum degree $\Delta(G) \le \gamma n$ and minimum degree $\delta(G) \ge m$ the following holds.
	With $p \ge C \log n/n$ there are a.a.s.~at least $m$ disjoint $C_\ell$'s in $G \cup G(n,p)$.
\end{proposition}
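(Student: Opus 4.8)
The plan is to assemble the $m$ cycles from three \emph{gadgets}: a $C_\ell$ lying entirely in $G(n,p)$; a copy of $P_\ell$ in $G$ closed by a single $G(n,p)$-edge between its two endpoints; and a single vertex $v$ of $G$ together with a copy of $P_{\ell-1}$ found inside $G(n,p)$ restricted to $N_G(v)$, joined to $v$ by its two end-edges (which lie in $G$). First I would record the properties of $G(n,p)$ used, all holding a.a.s.\ when $C$ is large, by Janson's inequality or the second moment method together with a union bound, exactly as for~\ref{random_graph:path} and~\ref{random_graph:cycle}: for every $U\subseteq V$ with $|U|\ge n/(128\ell)$ the graph $G(n,p)[U]$ spans a copy of $P_{\ell-1}$; and property~\ref{random_graph:cycle} in the strengthened form that $G(n,p)[U]$ contains at least $100\log^\ell n$ pairwise vertex-disjoint copies of $C_\ell$ whenever $|U|\ge n/2$.

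The argument splits on the set $H:=\{v:\deg_G(v)\ge n/(64\ell)\}$ of high-degree vertices. If $|H|\ge m$, I greedily build $m$ cycles of the third type, the $i$-th anchored at a distinct $v_i\in H$ by locating a copy of $P_{\ell-1}$ inside $G(n,p)$ restricted to the still-unused part of $N_G(v_i)$ — a set of size at least $n/(64\ell)-\ell m\ge n/(128\ell)$ since $m\le M\sqrt n$ — and joining it to $v_i$. If $|H|<m$, I instead first find $m-|H|$ disjoint cycles inside $(G-H)\cup G(n,p)$, using only vertices of $G-H$, and then $|H|$ further cycles of the third type anchored at the vertices of $H$, built last, each using whatever $\ge n/(128\ell)$ unused vertices of $N_G(v_i)$ remain (enough, as only $\ell m=o(n)$ vertices are ever used). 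Every $v\notin H$ retains $\ge\deg_G(v)-|H|\ge m-|H|$ neighbours in $G-H$, which moreover has $n-|H|\ge n/2$ vertices and maximum degree below $n/(64\ell)\le(n-|H|)/(32\ell)$; so it suffices to prove the proposition under the extra hypothesis $\Delta(G)<n/(32\ell)$, and I keep the names $G$, $n$, $m$.

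So suppose $\Delta(G)<n/(32\ell)$ and $\delta(G)\ge m$. If $m\le100\log^\ell n$ we are done by the strengthened property~\ref{random_graph:cycle}. Otherwise I use the second gadget (for even $\ell$ one additionally uses copies of $C_\ell$ already present in $G$, and pairs at odd $G$-distance replace common-neighbour pairs in what follows). Let $\cP$ be the family of pairs $\{x,y\}$ for which $G$ contains a copy of $P_\ell$ with endpoints $x$ and $y$; since $\delta(G)\ge m\ge\ell$, a short rotation/detour argument shows $\cP$ contains every pair of vertices having a common $G$-neighbour and compatible distance parity, and more. This family is deterministic, and I claim $|\cP|\ge mn/\log n$: setting $D:=Cm\log n/8$ and calling a vertex \emph{heavy} if its degree is at least $D$, one double counts the $\ge n\binom m2$ triples $(v,\{x,y\})$ with $x,y\in N_G(v)$, splitting according to whether the witness $v$ and/or the pair $\{x,y\}$ is heavy; using that a non-heavy pair has $<D$ common neighbours, that there are at most $n$ heavy vertices, and that all degrees are below $n/(32\ell)$, one gets in every case $|\cP|=\Omega(nm^2/D)=\Omega(mn/\log n)$. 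Consequently a Chernoff bound (for this single family) gives that a.a.s.\ $G(n,p)$ has $\ge m$ edges inside $\cP$; and these can be realised as $m$ pairwise vertex-disjoint cycles of the second type by working component by component — each component $K$ of $G$ either being large enough (relative to $\ell m$) to host a fresh copy of $P_\ell$ between any prescribed pair of its vertices, or else receiving only about $|K|\,m/n$ of these $G(n,p)$-edges, comfortably below its capacity of $|K|/\ell$ disjoint copies of $C_\ell$.

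The main obstacle is exactly this last part: the lower bound on $|\cP|$ for a \emph{completely arbitrary} $G$ with $m\le\delta(G)$, $\Delta(G)<n/(32\ell)$ and $m$ possibly as small as $\log^\ell n$, together with the disjoint realisation of the cycles. The difficulty is structural — $G$ may be highly clustered, concentrating its common-neighbour pairs into small complete-bipartite-like pieces, or may even have girth larger than $\ell$ and so contain no $C_\ell$ at all — so all of the flexibility needed to build the cycles must be extracted from the minimum-degree condition on $G$, since $p=\Theta(\log n/n)$ lies below every constant-length ``bridging'' threshold and $G(n,p)$ therefore cannot connect prescribed vertices by short paths. This is also where the hypothesis $m\le M\sqrt n$ enters (it keeps $m^2/n$, hence the number of $G(n,p)$-edges inside any $o(n)$-set, under control), as does the precise choice of the high-degree threshold.
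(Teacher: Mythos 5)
Your global strategy for the core case --- count the pairs $\{x,y\}$ joined by a copy of $P_\ell$ in $G$, observe that $p\cdot|\cP|\gg m$, and close the pairs hit by $G(n,p)$ into cycles --- differs from the paper's, and it founders exactly at the step you yourself flag as ``the main obstacle''. Getting a.a.s.\ at least $m$ random edges whose endpoints are joined by \emph{some} $P_\ell$ in $G$ is not enough: you need $m$ \emph{pairwise vertex-disjoint} cycles, and the connecting paths in $G$ for different hit pairs may be forced through a common bottleneck. Your proposed repair (``each component is either large enough to host a fresh $P_\ell$ between any prescribed pair, or receives only about $|K|m/n$ edges, below its capacity $|K|/\ell$'') is not a valid argument. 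A large component need not host vertex-disjoint connectors for the pairs that are hit: take two copies of $K_{m+1}$ joined by a single bridge edge $x_0y_0$. All $\approx m^2$ cross pairs lie in $\cP$, yet every $P_\ell$ joining a cross pair passes through both $x_0$ and $y_0$, so at most one cross pair can ever be realised, however many random edges land on cross pairs; chaining many cliques by single bridges produces components of arbitrary size whose realisation capacity for the hit pairs is far below $|K|/\ell$. So the capacity of a component is governed by its internal cut structure, not by its order, and nothing in your sketch rules out that the random edges concentrate on pairs whose connectors cannot be chosen disjointly. (Incidentally, the lower bound $|\cP|=\Omega(nm)$ is true and has a one-line proof --- greedily grow a path $v=v_1,\dots,v_{\ell-1}$ from each $v$ and vary the last vertex over the $\ge m-\ell+2$ unused neighbours of $v_{\ell-1}$ --- whereas your route through common neighbours, ``compatible distance parity'' and an unspecified rotation/detour argument is both unnecessary and unsubstantiated; but this is not where the difficulty lies.)

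This disjointness issue is precisely what the paper's proof is built to handle, and it is the part your proposal is missing. The paper first extracts from $G$ (using $\delta(G)\ge m$ and $\Delta(G)\le\gamma n$) a family of \emph{vertex-disjoint} stars with $\eps m\le g_K\le \eps\sqrt n$ and $\sum_K g_K^2=\Omega(nm)$ (Lemma~\ref{lem:manystars}); it then uses the expansion of $G(n,p)$ (Lemma~\ref{lem:expander}, Claim~\ref{claim:expander}) to attach to each star pairwise disjoint sets $A_{K,3},\dots,A_{K,\ell-1}$, so that each star becomes a self-contained gadget in which a single random edge between $A_{K,\ell-1}$ and $A_{K,\ell}$ completes a cycle; finally a dyadic splitting of the stars by size plus Chernoff (Claim~\ref{claim:StarsTriangles}) shows that a.a.s.\ at least $m$ \emph{distinct} stars receive such an edge. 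Disjointness of the $m$ cycles is then automatic because the gadgets are disjoint by construction. Any repair of your argument would have to organise your pairs into vertex-disjoint structures in essentially this way, i.e.\ to reprove something like Lemma~\ref{lem:manystars}; as written, the proposal has a genuine gap. (Your preliminary reduction splitting off vertices of degree $\ge n/(64\ell)$ is harmless but redundant: the paper performs it once in the proof of Theorem~\ref{thm:sublinear}, before invoking the proposition, which already assumes $\Delta(G)\le\gamma n$.)
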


Finally, if $M \sqrt{v(G')} \le m' \le \frac{n}{64 \ell^2}$ then we also have $M \sqrt{v(G')} \le m' \le \frac{n}{64 \ell^2} \le \frac{v(G')}{16 \lceil \ell/2 \rceil}$, and, given the choice of $M$, we can apply the following proposition to $G'$ and again a.a.s.~find at least $m'$ vertex-disjoint cycles in $G \cup G(n,p)[V \setminus V']$.

\begin{proposition}
	\label{prop:gesqrt}
	Let $\ell \ge 3$ be an integer.
	There exist $M=M(\ell) \ge 1$ and $C=C(\ell)>0$ such that for any $M \sqrt{n} \le m \le \frac{n}{16 \lceil \ell/2 \rceil}$ and any $n$-vertex graph $G$ with maximum degree $\Delta(G) <\frac{n}{32 \lfloor \ell/2 \rfloor}$ and minimum degree $\delta(G) \ge m$ the following holds.
	With $p \ge C \log n/n$ there are a.a.s.~at least $m$ disjoint $C_\ell$'s in $G \cup G(n,p)$.
\end{proposition}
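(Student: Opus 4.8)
The plan is to build the $m$ cycles one at a time, using throughout a single flexible building block: a copy of $C_\ell$ of the form $v_1v_2\cdots v_\ell v_1$ in which $v_1v_2,\dots,v_{\ell-1}v_\ell$ are edges of $G$ forming a path and the one closing edge $v_1v_\ell$ comes from $G(n,p)$. The point is that, after fixing the ``inner'' $G$-path $v_2\cdots v_{\ell-1}$ — which for $\ell=3$ is just a single vertex $v_2=v_{\ell-1}$ — the two remaining vertices $v_1\in N_G(v_2)$ and $v_\ell\in N_G(v_{\ell-1})$ may range freely over entire $G$-neighbourhoods; since $\delta(G)\ge m\ge M\sqrt n$, there are at least $m^2-m$ candidate pairs $(v_1,v_\ell)$, so that if $M$ and $C$ are large enough then $G(n,p)$ almost surely contains an edge between $N_G(v_2)$ and $N_G(v_{\ell-1})$ and the cycle closes up. Here the hypothesis $m\ge M\sqrt n$ is exactly what puts the relevant neighbourhoods above the threshold at which $G(n,p)$ restricted to (or between) them is guaranteed to span an edge, while $\Delta(G)<n/(32\lfloor\ell/2\rfloor)$ keeps the potential closing edges from all concentrating in a few neighbourhoods and also ensures that the cycles built so far spoil only few neighbourhoods.

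First I would expose $R:=G(n,p)$ and record the almost-sure events we use, for $C=C(\ell)$ large: the degree bound $\Delta(R)\le 2np$; the independence-number bound $\alpha(R)=O(n/C)$, which disposes of the range where $m$ is close to its upper limit; suitable richness bounds for the number of edges of $R$ inside and between $G$-neighbourhoods; and in particular the event that for \emph{every} pair of vertices $x,y$ (possibly equal) with $|N_G(x)|,|N_G(y)|\ge m$ there is an edge of $R$ between $N_G(x)$ and $N_G(y)$ — a single such event fails with probability at most $(1-p)^{m^2-m}\le n^{-CM^2/3}$, so a union bound over the at most $\binom n2$ pairs $\{x,y\}$ makes all of them hold once $CM^2$ is large.

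The construction is then greedy. Suppose at most $m-1$ cycles are already built and let $U$ be the set of at most $\ell m\le n/8$ vertices they occupy. Since each vertex of $U$ has $G$-degree below $n/(32\lfloor\ell/2\rfloor)$, only at most $2|U|\Delta/m<n/4$ vertices of $G$ can have $m/2$ or more of their $G$-neighbours inside $U$; call all other vertices \emph{good}. Using $\delta(G)\ge m$ one builds, greedily, staying among good vertices and avoiding $U$, an inner $G$-path $v_2\cdots v_{\ell-1}$ on $\ell-2$ vertices whose ends $v_2,v_{\ell-1}$ still have at least $m/2$ $G$-neighbours outside $U\cup\{v_2,\dots,v_{\ell-1}\}$; the recorded event for $R$ then supplies an edge between those two residual neighbourhoods, completing a new $C_\ell$ disjoint from $U$. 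After $m$ steps we are done. When $m$ is close to $n/(16\lceil\ell/2\rceil)$ the residual neighbourhoods have size $\Omega(n)>\alpha(R)$, so the bound on $\alpha(R)$ already forces an edge of $R$ inside (hence between) them, and this argument goes through with nothing further to do.

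The main obstacle is that the residual neighbourhoods $N_G(v_2)\setminus U$ and $N_G(v_{\ell-1})\setminus U$ on which we invoke the richness of $R$ depend on $U$, and $U$ was chosen using $R$: one cannot simply quote the almost-sure event for a \emph{fixed} pair of sets, and one cannot afford a union bound over all possible $U$, of which there are $n^{\Theta(m)}$. This adaptivity is genuinely delicate precisely when $m$ is near $M\sqrt n$, where an $\Omega(\sqrt n)$-set being independent in $R$ is not a negligible event. I would resolve it by exposing $R$ as a union of a bounded number $K=K(\ell)$ of independent copies $R_1,\dots,R_K$ of $G(n,(C/K)\log n/n)$ and, in round $j$, adding a \emph{maximal} family of disjoint cycles built using $R_j$ against the current — and, crucially, $R_j$-independent — set $U_{j-1}$; the task is then to show each round adds a constant fraction of the still-missing cycles. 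This is where the three hypotheses interlock: a counting argument, using $\delta(G)\ge m$ to produce many candidate building blocks and $\Delta(G)<n/(32\lfloor\ell/2\rfloor)$ to bound how many candidate blocks any single vertex can meet, forces a maximal family to have size $\Omega(m)$, indeed $\gg m$ once $M$ is large enough relative to $C$, so that $K$ rounds suffice. The remaining work — making this counting precise, in particular controlling the effect of vertices of intermediate degree, and checking the various size inequalities — is routine but lengthy.
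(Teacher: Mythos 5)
You correctly isolate the real difficulty (the residual neighbourhoods depend on the set $U$ of used vertices, which depends on the randomness), but the mechanism you propose to resolve it is exactly the missing mathematical content, and as sketched it does not work. First, the ``maximality plus counting'' step fails because the maximum degree is only bounded by $\frac{n}{32\lfloor \ell/2\rfloor}=\Theta(n)$, which can exceed $m$ by a factor of order $\sqrt n$: the number of candidate blocks (an $\ell$-vertex $G$-path closed by one $R_j$-edge) is of order $p\,n\,m^{\ell-1}$ in expectation, while the number of candidate blocks through a single vertex can be of order $p\,\Delta(G)^{\ell-1}$, so the usual ``every candidate meets the maximal family'' count gives a lower bound far below $m$ (for $m$ near $M\sqrt n$ it is $O(1)$ or even $o(1)$); on top of that, the per-vertex bound you would need is a with-high-probability upper-tail statement for a heavily weighted sum of edge indicators, which is not available by a cheap union bound. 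Second, a bounded number $K=K(\ell)$ of rounds is quantitatively impossible in the upper range of $m$: at per-round edge density $\Theta(\log n/n)$, a closing edge between two prescribed disjoint sets requires the product of their sizes to be $\Omega(n/\log n)$, so vertex-disjoint gadgets must each occupy $\Omega(\sqrt{n/\log n})$ vertices and one round can produce only $O(\sqrt{n\log n})$ disjoint cycles, whereas $m$ can be as large as $\Theta(n)$; hence any scheme of this type needs a number of rounds growing with $m$, not a constant $K(\ell)$. (Your claim that a maximal family has size ``$\gg m$ once $M$ is large'' would in fact make even one round suffice, which already signals that the counting cannot be right.) A smaller but genuine further gap is the greedy construction of the inner path ``staying among good vertices'': a good vertex has $m/2$ residual neighbours, but nothing forces any of them to be good or even to have a single residual neighbour themselves, so the greedy can stall after one step; with $\delta(G)\ge m\sim\sqrt n$ one cannot connect two prescribed high-residual-degree vertices by a short $G$-path in general.

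The paper's proof is built precisely around these two obstructions. For even $\ell$ no randomness is needed at all: $e(G)\ge nm/3\ge K(\ell)\,n^{1+2/\ell}$, so by Bondy--Simonovits (and the $C_4$ bound) one extracts $m$ disjoint copies of $C_\ell$ greedily inside $G$. For odd $\ell$ one passes to a max-cut bipartite subgraph $G[A,B]$ with minimum degree $m/2$, uses dependent random choice to manufacture, avoiding the used vertices, an $(\ell-2)$-vertex path whose two endpoints each have at least $\lceil m/16\rceil$ neighbours in the unused part of $B$, and then closes cycles by multi-round sprinkling: $t=\lceil m^2/(2n)\rceil$ rounds at per-round probability $q=C\log n/m^2$ (this is exactly where $m\ge M\sqrt n$ is used, via $tq\le p$), each round containing $s=\lceil 2n/m\rceil$ gadgets whose end-sets are pairwise disjoint and are fixed before that round's edges are revealed, so the adaptivity issue never arises. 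If you want to rescue your outline, the number of rounds has to scale like $m^2/n$ with per-round probability of order $\log n/m^2$ and with the gadgets in a round chosen disjointly in advance --- at which point you have reconstructed the paper's argument, including the need for the max-cut and dependent-random-choice step to build the inner paths that your greedy step presupposes.
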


Now, after we found $m'$ disjoint cycles, we can greedily add cycles by using the $m-m'$ vertices from $V'$ and a path in their neighbourhood until we have $m$ cycles.
Analogous to above, as long as we have less than $m$ cycles, each available vertex $v$ from $V'$ has at least $\frac{n}{64 \ell}$ neighbours not covered by cycles, and with~\ref{random_graph:path} we get a cycle in $G \cup G(n,p)$ containing $v$.
That completes the proof of Theorem~\ref{thm:sublinear}.

\smallskip

We now give an overview of the proofs of Propositions~\ref{prop:lesqrt} and~\ref{prop:gesqrt} and then a full proof in Section~\ref{sec:appendix}.
For Proposition~\ref{prop:lesqrt} we rely on the following lemma that we proved in our previous work~\cite[Lemma $7.3$]{triangle_paper} and that allows us to find many large enough pairwise vertex-disjoint stars in $G$.
Before stating it, we need to introduce some notation.
With $g_K \ge 2$ an integer, a \emph{star} $K$ is a graph on $g_K+1$ vertices with one vertex of degree $g_K$ (this vertex is called the \emph{centre}) and the other vertices of degree one (these vertices are called \emph{leaves}).

\begin{lemma}[Lemma~7.3 in~\cite{triangle_paper}]
	\label{lem:manystars}
	For every $0<\gamma<1/2$ and integer $s>0$ there exists an $\eps>0$ such that    
	for $n$ large enough and any $m$ with $2/\eps \le m \le \sqrt{n}$ the following holds.
	In every $n$-vertex graph $G$ with minimum degree $\delta(G) \ge m$ and
	maximum degree $\Delta(G) \le \gamma n$ there exists a family $\mathcal{K}$ 
	of vertex-disjoint stars in $G$ such that every $K \in {\mathcal K}$ has $g_K$ leaves with
	$\eps m \le g_K \le \eps \sqrt{n}$ and
	$$\sum_{K \in \cK} g_K^2 \ge s \eps^2 n m.$$
\end{lemma}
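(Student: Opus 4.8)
The plan is to build $\cK$ greedily, adding one star at a time while maintaining the set $W$ of already-used vertices, and making every star as large as the constraint $\eps m \le g_K \le \eps\sqrt n$ permits. Fix $\eps = \eps(\gamma,s)$ small (its exact requirements surface at the end), assume $n$ large and $2/\eps \le m \le \sqrt n$, and start with $W = \emptyset$, $U := V(G)$. The greedy step: while some $v \in U$ has $|N_G(v)\cap U| \ge \eps m$, pick one with $|N_G(v)\cap U|$ maximal, set $g := \min\bigl(|N_G(v)\cap U|,\ \lfloor \eps\sqrt n\rfloor\bigr)$ (so $\eps m \le g \le \eps\sqrt n$, using $m \le \sqrt n$), form a star with centre $v$ and $g$ arbitrary neighbours of $v$ in $U$, and move these $g+1$ vertices from $U$ to $W$; stop early if the current family already satisfies $\sum_{K}g_K^2 \ge s\eps^2 nm$.

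For the analysis let $N := |\cK|$. Each star has $g_K+1$ vertices and $g_K \ge \eps m \ge 2$, so $|W| = N + \sum_K g_K$ with $N \le \tfrac1{\eps m}\sum_K g_K \le \tfrac12\sum_K g_K$, hence $\sum_K g_K \ge \tfrac23 |W|$; by Cauchy--Schwarz $\sum_K g_K^2 \ge (\sum_K g_K)^2/N \ge \eps m \sum_K g_K \ge \tfrac23\,\eps m\,|W|$. So it suffices to run the process until $|W|$ exceeds a suitable constant multiple of $s\eps n$, i.e.\ until a \emph{constant fraction} of all $n$ vertices is used. Suppose instead the process halts with used set $W$ and available set $U = V\setminus W$ before reaching the target. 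Halting means $G[U]$ has maximum degree below $\eps m$, so every $u \in U$ sends more than $(1-\eps)m$ edges into $W$, giving $e(U,W) > (1-\eps)m|U|$; on the other hand the maximum-degree hypothesis gives $e(U,W) = \sum_{w\in W}|N_G(w)\cap U| \le \Delta(G)\,|W| \le \gamma n\,|W|$. Together these force $|W| \ge \tfrac{(1-\eps)m}{(1-\eps)m+\gamma n}\,n$.

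The main obstacle is precisely that this bound is only $|W| = \Omega(m/\gamma)$, which is a constant fraction of $n$ only when $m = \Theta(n)$; for $m \ll n$ it merely yields $\sum_K g_K^2 = \Omega(\eps m^2/\gamma)$, short of the target $s\eps^2 nm$ by a factor of order $n/m$. The friction is that $U$ can funnel all of its $\Theta(nm)$ incident edges into a tiny used set $W$ of order $m/\gamma$ consisting of high-degree vertices, whose combined edge capacity $|W|\,\Delta(G)$ already matches $\Theta(nm)$. I expect the resolution to refine the greedy in two ways according to the degree sequence of $G$. In the near-regular regime one should, when building a star, choose its leaves (and, when possible, its centre) among vertices of \emph{smallest} degree in $G$: this keeps $\sum_{w\in W}\deg_G(w)$ close to $m|W|$ rather than $\gamma n|W|$, so the sharper bound $e(U,W) \le \sum_{w\in W}\deg_G(w)$ forces $|W| = \Omega(n)$ and hence $\sum_K g_K^2 = \Omega(\eps mn)\ge s\eps^2 nm$ once $\eps$ is small. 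In the complementary regime, where $G$ has many vertices of large degree, one instead selects a large subcollection of them as centres and produces pairwise-disjoint leaf sets of size $\lfloor\eps\sqrt n\rfloor$ for each via a flow / defect-Hall argument, exploiting that such vertices have large and (for a suitable subcollection) sufficiently expanding neighbourhoods; the resulting stars are each of the maximal size, so comparatively few of them already push $\sum_K g_K^2$ past the target. Combining the two regimes, with $\eps$ chosen small enough relative to $\gamma$ and $s$, should complete the proof.
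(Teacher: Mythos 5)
This paper never proves the lemma itself---it is quoted from Lemma~7.3 of \cite{triangle_paper}---so your argument has to stand on its own, and as written it is not yet a proof. The greedy construction and its halting analysis are correct as far as they go, and the obstruction you isolate is genuine; but from that point on you only conjecture a repair (``I expect the resolution\dots'', ``should complete the proof''): neither the near-regular prong nor the high-degree prong is carried out, the degree threshold separating the two regimes is never specified, and it is not shown that the two regimes together cover all graphs. That conjectural part is exactly where all the difficulty of the lemma sits.

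Moreover, the repair as you sketch it would not go through. Take $G$ to be an (almost) biregular bipartite graph with parts $W_0$ of size $\lceil m/\gamma\rceil$ and $V\setminus W_0$, in which every vertex of $V\setminus W_0$ has exactly $m$ neighbours, all in $W_0$; then $\delta(G)=m$, $\Delta(G)\le \gamma n$, and the low-degree vertices form an independent set. Here your prong-1 plan of keeping the used set $W$ free of high-degree vertices is impossible: every admissible star is either centred at a vertex of $W_0$ or has all of its $g_K\ge \eps m$ leaves inside $W_0$. Consequently every admissible family satisfies $\sum_{K\in\cK} g_K^2 \le |W_0|\bigl((\eps\sqrt n)^2+m\bigr)\approx \eps^2 nm/\gamma$, because at most $|W_0|\approx m/\gamma$ stars can be centred in $W_0$ and each is capped at $\eps\sqrt n$ leaves, while the stars centred outside have $g_K\le m$ and pairwise disjoint leaf sets inside $W_0$. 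So your prong-2 claim that ``comparatively few maximal stars already push $\sum_K g_K^2$ past the target'' is precisely the quantitative crux: it succeeds only when $s\gamma$ is suitably small (indeed this example shows the statement cannot hold for arbitrary $\gamma<1/2$ and arbitrary $s$, e.g.\ $\gamma=0.4$, $s=3$; it is only invoked in the regime where $\gamma$ is small compared with $1/s$---in this paper $\gamma\le 1/32$ and $s\le 8$). Any correct completion must therefore fix the dichotomy threshold, count the available high-degree centres against the roughly $sm$ maximal stars needed, and make the interplay between $s$ and $\gamma$ explicit---none of which your sketch does.
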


With $\cK$ being the family of stars given by Lemma~\ref{lem:manystars}, we show that a.a.s.~at least $m$ stars of $\cK$ can be completed to cycles using edges of $G(n,p)$.
When $\ell=3$, it suffices to find one random edge within the set of leaves of a star $K$, for at least $m$ different stars $K \in \cK$ (see the proof of Proposition~7.1 in~\cite{triangle_paper} for details).
However, when $\ell>3$, we cannot find enough paths of length $\ell-2$ within the sets of leaves and, instead, proceed differently.
For each star $K \in \cK$, we split the set of its leaves into two sets $A_{K,2}$ and $A_{K,\ell}$ each of size $g_K/2$.
Then we find pairwise disjoint sets $A_{K,3},\dots,A_{K,\ell-1}$, each of size $g_K/2$, such that for $i=3,\dots,\ell-1$, every vertex from $A_{K,i}$ has at least one neighbour in $A_{K,i-1}$ in the random graph $G(n,p)$.
This can be done using expansion properties of $G(n,p)$, and we can also guarantee that the sets are pairwise disjoint for every $K \in \cK$.
Finally, analogously to the $C_3$ case, a.a.s.~we find an edge between $A_{K,\ell}$ and $A_{K,\ell-1}$ for at least $m$ different stars $K$.
Using the property of the new sets, by working backwards from $A_{K,\ell-1}$ to $A_{K,2}$, and adding two more edges from the star $K$, we obtain at least $m$ pairwise vertex-disjoint copies of $C_\ell$.

On the other hand, for Proposition~\ref{prop:gesqrt} and $m > M \sqrt{n}$, we cannot hope to find many large enough disjoint stars, and we need a different approach.
When $\ell$ is even, the proof is easy and follows from upper bounds on the extremal number of $C_\ell$, which is the maximum number of edges in an $n$-vertex graph that does not contain a copy of $C_\ell$. 
We find at least $m$ cycles greedily in $G$, as any $C_\ell$-free graph contains at most $\tfrac 34 n^{3/2}$ edges if $\ell=4$~\cite{turan_c4}, and at most $O(n^{1+2/\ell})$ edges if $\ell>4$~\cite{BS_TuranEvenCycles}.

For odd $\ell$, we use that $m \ge M \sqrt{n}$ is large enough to find an edge within the neighbourhood of each vertex, already with probability $q=\tfrac{C \log n}{m^2}$.
When $\ell=3$, we let $s=\lceil \tfrac nm \rceil$ and $t=\lceil \tfrac{m^2}{2n} \rceil$ and we find $s$ cycles $C_3$ in each of $t$ rounds.
More precisely, in each round we find $s$ vertices $v_1,\dots,v_s$ and pairwise disjoint sets of neighbours $B_1,\dots,B_s$ each of size $\lceil m/16 \rceil$.
Then we simply reveal edges with probability $q$ and get an edge of $G(n,q)$ within each of $B_1,\dots,B_s$.
As $tq \le C \tfrac{\log n}{n}$ we can repeat this for $t$ rounds and find $ts \ge m$ pairwise vertex-disjoint cycles $C_3$ (see the proof of Proposition~7.2 in~\cite{triangle_paper} for details).

When $\ell > 3$, we still find $s$ cycles $C_\ell$ in $t$ rounds, but this time, in each round and for each $i=1,\dots,s$, we do the following.
Instead of the vertex $v_i$, we construct a path in $G$ on $\ell-2$ vertices $v_{i,2},v_{i,3}, \dots, v_{i,\ell-1}$ and two sets $B_{i,2}$ and $B_{i,\ell-1}$ of at least $\lceil m/32 \rceil$ neighbours of $v_{i,2}$ and $v_{i,\ell-1}$, respectively.
We find this path using the dependent random choice technique, which is a powerful tool that, for example, gives upper bounds on extremal numbers of bipartite graphs (see the survey \cite{dependentRC}).
After having done that for each $i=1,\dots,s$, we find an edge of $G(n,q)$ between $B_{i,2}$ and $B_{i,\ell-1}$, that gives a cycle $C_\ell$.
As in the case $\ell=3$, we can perform $t$ rounds and find $ts \ge m$ pairwise vertex-disjoint cycles $C_\ell$.

\section{Proof of Proposition~\ref{prop:lesqrt} and~\ref{prop:gesqrt}}
\label{sec:appendix}

For the proof of Proposition~\ref{prop:lesqrt} we will use that the random graph $G(n,p)$ is expanding in the following sense:
\begin{lemma}
	\label{lem:expander}
	Let $G$ be a graph drawn from $G(n,p)$ and let $A$ and $B$ be two disjoint subsets of $V(G)$ with $1 \le |A| \le \sqrt{n}$ and $|B| \ge n/2$.
	Then with probability at least $1-1/n^2$ the subset $A$ has at least $|A|$ neighbours in $B$, provided that $p \ge 27 \log n/n$ and $n$ is large enough.
\end{lemma}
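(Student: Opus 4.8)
The plan is to bound the probability that $A$ has \emph{fewer} than $|A|$ neighbours in $B$ by a union bound over all potential ``small neighbourhood'' sets. Write $a=|A|$ and $b=|B|\ge n/2$. If $A$ has at most $a-1$ neighbours in $B$, then there is a set $S\subseteq B$ with $|S|=a-1$ such that no edge of $G(n,p)$ joins $A$ to $B\setminus S$. Given such a fixed $S$, the number of potential edges from $A$ to $B\setminus S$ is $a(b-a+1)$, and each is present independently with probability $p$, so the probability that none of them is present is $(1-p)^{a(b-a+1)}\le \exp(-p\,a(b-a+1))$. Taking the union bound over the at most $\binom{b}{a-1}\le b^{a-1}\le n^{a-1}\le n^{a}$ choices of $S$, the failure probability is at most
\[
  n^{a}\exp\big(-p\,a(b-a+1)\big)
  = \exp\big(a\log n - p\,a(b-a+1)\big).
\]

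First I would use the hypotheses $a\le\sqrt n$ and $b\ge n/2$ to see that $b-a+1\ge n/2-\sqrt n+1\ge n/3$ for $n$ large; hence $p\,a(b-a+1)\ge p a n/3$. With $p\ge 27\log n/n$ this gives $p\,a(b-a+1)\ge 9a\log n$, so the exponent is at most $a\log n-9a\log n=-8a\log n\le -8\log n$ (using $a\ge 1$). Therefore the failure probability is at most $n^{-8}\le n^{-2}$ for $n$ large enough, which is what we want.

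I do not anticipate a genuine obstacle here; the only points requiring a little care are the estimate $b-a+1\ge n/3$ (which uses $a\le\sqrt n = o(n)$ and $b\ge n/2$, valid for large $n$) and the crude bound $\binom{b}{a-1}\le n^{a}$ on the number of candidate sets $S$. The slack between $-8\log n$ and the target $-2\log n$ is comfortable, so no optimisation of constants is needed.
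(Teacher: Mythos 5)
Your proposal is correct and follows essentially the same route as the paper: both arguments fix a candidate neighbourhood set of size $|A|-1$, bound the probability of no edge from $A$ to the complementary set of size at least $n/3$, and take a union bound over the $\binom{|B|}{|A|-1}$ choices. The only cosmetic difference is that you bound the no-edge probability directly by $(1-p)^{|A||B'|}\le \exp(-p|A||B'|)$ rather than via the Chernoff-type bound $\exp(-p|A||B'|/3)$ used in the paper, which gives you a slightly better exponent but changes nothing essential.
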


\begin{proof}{Proof of Lemma~\ref{lem:expander}}
	Let $A$ and $B$ be two disjoint subsets of $V$ with $|A| \le \sqrt{n}$ and $|B| \ge n/2$ and let $p \ge 27 \log n/n$ and $n \ge 36$.
	Assume that $A$ has less than $|A|$ neighbours in $B$.
	Then there exists $B' \subset B$ of size $|B|-(|A|-1)$ with $e(A,B')=0$. 
	The expected number of edges between $A$ and $B'$ is $|A| |B'| p$, thus the probability that there is no edge between $A$ and $B'$ is at most  $\exp(-|A||B'|p/3) \le \exp(-3|A| \log n)$ where we used the Chernoff's inequality and that $|B'| \ge n/3$.  
	By a union bound over all $\binom{|B|}{|A|-1} \le \exp(|A| \log |B|)$ choices for $B'$, the probability that there exists $B' \subset B$ of size $|B|-(|A|-1)$ with $e(A,B')=\emptyset$ is at most $\exp(-3 |A|\log n) \exp(|A| \log |B|) = \exp( -2|A| \log n) \le 1/n^2$.
	Thus,  $A$ has at least $|A|$ neighbours in $B$ with probability at least $1-1/n^2$.
\end{proof}

With this lemma we can prove the first proposition.

\begin{proof}{Proof of Proposition~\ref{prop:lesqrt}}
	Let $\ell \ge 3$ be an integer, $M \ge 1$ and $0<\gamma<1/2$.
	Let $C>0$ and $n$ be sufficiently large for the following arguments.
	With $\log^3 n \le m \le M \sqrt{n}$, let $G$ be an $n$-vertex graph with maximum degree $\Delta(G) \le \gamma n$ and minimum degree $\delta(G) \ge m$.
	We first find many disjoint stars in $G$ and then complete at least $m$ of them to cycles $C_\ell$ with the help of $G(n,p)$.
	
	Let $m'=\min(m,\sqrt{n})$.
	We apply Lemma~\ref{lem:manystars} to $G$ with $\gamma$, $s=\lceil 8/M \rceil$ and $m'$ to get $\eps'$ and, as $n$ is large enough and $m' \ge 2/\eps'$, we get a family $\cK$ of vertex disjoint stars on $V(G)$ such that $\eps' m' \le g_K \le \eps' \sqrt{n}$ for $K \in \cK$ and $\sum_{K \in \cK} g_K^2 \ge s \eps'^2 n m'$.
	We can assume w.l.o.g.~that $\eps' < \min \{ \frac{1}{4(s+1)} , \frac{1}{2(s+1)(\ell-2)}\}$.
	
	Thus as $m' \ge m/M$ and by setting $\eps=\eps'/M$, we have that $\eps m \le g_K \le \eps M \sqrt{n}$ for $K \in \cK$ and $\sum_{K \in \cK} g_K^2 \ge s M \eps^2 n m$.
	By deleting at most one vertex in each star of $\cK$, we can assume each star has an even number of leaf vertices (so $g_K$ is even for each $K \in \cK$) and, by deleting some stars of $\cK$, we can additionally assume that $s M \eps^2 n m \le \sum_{K \in \cK} g_K^2 \le (s+1) M \eps^2 n m$.
	Let $V' \subset V$ be the set of vertices not contained in any star of $\cK$.
	Then as $|\cK| \le \sum_{K \in \cK} g_K^2 /(\eps m)^2$, with the Cauchy-Schwarz inequality, we get that $\sum_{K \in \cK} g_K \le (s+1) M \eps n$, from which we conclude that $|V'| \ge 3n/4$ (using $\eps < \frac{1}{4M(s+1)}$).	
	For each star $K \in \cK$, we split the subset of its leaf vertices in half to get two subsets $A_{K,2}$ and $A_{K,\ell}$ each of size $g_K/2 \ge 1$.
	
	\begin{claim}
		\label{claim:expander}
		A.a.s~the following holds.
		For each $K \in \cK$ we can find $A_{K,3}, \dots, A_{K,\ell-1} \subset V'$ each of size $g_K/2$ such the sets $A_{K,i}$ with $K \in \cK$ and $3 \le i \le \ell-1$ are pairwise disjoint and for each $K \in \cK$, $3 \le i \le \ell-1$ and $v \in A_{K,i}$, there is a $w \in A_{K,i-1}$ such that $wv$ is an edge of $G(n,p)$. 
	\end{claim}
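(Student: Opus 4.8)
The plan is to build the sets $A_{K,i}$ greedily, one index at a time, using Lemma~\ref{lem:expander} as the engine at every step. Fix any ordering of the pairs $(K,i)$ with $K\in\cK$ and $3\le i\le\ell-1$ in which the indices of each fixed star are processed in increasing order, and maintain the set $W\subseteq V'$ of vertices already placed into some $A_{K',i'}$ (initially $W=\emptyset$). When the process reaches $(K,i)$, the set $A:=A_{K,i-1}$ is already determined --- it is the leaf‑half $A_{K,2}$ when $i=3$, and a previously constructed subset of $V'$ otherwise --- and $|A|=g_K/2$. Put $B:=V'\setminus W$. I would then invoke Lemma~\ref{lem:expander} for this pair $(A,B)$ to produce at least $|A|=g_K/2$ neighbours of $A$ inside $B$ in $G(n,p)$, let $A_{K,i}$ be any $g_K/2$ of them, and add $A_{K,i}$ to $W$. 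By construction the sets so obtained lie in $V'$, are pairwise disjoint, and each vertex of $A_{K,i}$ has a $G(n,p)$‑neighbour in $A_{K,i-1}$, which is exactly the conclusion of the claim. It remains to verify that Lemma~\ref{lem:expander} applies at each step and to bound the failure probability.

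The hypotheses of Lemma~\ref{lem:expander} hold throughout. First, $1\le g_K/2\le g_K\le\eps M\sqrt n=\eps'\sqrt n\le\sqrt n$, since $g_K$ is even with $g_K\ge\eps m\ge 2$ for large $n$ and $\eps'<1$; hence $1\le|A|\le\sqrt n$. Second, the total number of vertices ever placed into $W$ is at most $\sum_{K\in\cK}(\ell-3)\tfrac{g_K}{2}\le\tfrac{\ell-3}{2}\sum_{K\in\cK}g_K\le\tfrac{\ell-3}{2}(s+1)M\eps n=\tfrac{\ell-3}{2}(s+1)\eps' n<\tfrac n4$, using the assumed bound $\eps'<\tfrac1{2(s+1)(\ell-2)}$ and $\tfrac{\ell-3}{\ell-2}<1$; therefore $|B|=|V'|-|W|\ge\tfrac34 n-\tfrac14 n=\tfrac n2$ at every step.

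The one real difficulty, and the main obstacle, is that both $A=A_{K,i-1}$ and $B=V'\setminus W$ depend on $G(n,p)$, so the pair passed to Lemma~\ref{lem:expander} is not chosen in advance and a naive union bound is unavailable. I would handle this by exposing $G(n,p)$ lazily: at the step for $(K,i)$ we reveal only the pairs with one endpoint in $A_{K,i-1}$ and the other in the current $B=V'\setminus W$. A short check of the exposure order shows that no pair is revealed twice: across all steps the \emph{left} endpoints range over the pairwise disjoint sets $A_{K,j}$ ($K\in\cK$, $2\le j\le\ell-2$), the \emph{right} endpoints are vertices of $V'$ not yet placed into $W$ (so the leaf‑halves, which lie outside $V'$, never occur on the right), a vertex once placed stays placed, and at the step where a set $A_{K,j}$ with $j\ge3$ is used on the left all of its vertices already lie in $W$; chasing these facts through the two possible steps at which a repeated pair could appear forces the contradictory ordering $s_2<s_1<s_2$. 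Hence the edges examined at step $(K,i)$ are independent of all earlier revelations, so conditioning on the history --- which determines $A_{K,i-1}$ and $B$ with the correct sizes --- Lemma~\ref{lem:expander} bounds the failure probability at that step by $1/n^2$. Since the stars of $\cK$ are vertex‑disjoint and each has at least three vertices we have $|\cK|\le n/3$, whence there are fewer than $\ell n$ steps, and a union bound over them shows that a.a.s.\ every step succeeds; on that event the construction goes through, proving the claim.
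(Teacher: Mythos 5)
Your proposal is correct and follows essentially the same route as the paper: a greedy construction of the sets $A_{K,i}$ using Lemma~\ref{lem:expander} at every step, with the same bounds $|A_{K,i-1}|\le\sqrt n$ and $|V'\setminus W|\ge n/2$ (via $\eps'<\tfrac{1}{2(s+1)(\ell-2)}$) and a union bound over the at most $(\ell-3)|\cK|$ applications. The only difference is that you spell out the fresh-exposure bookkeeping (no pair revealed twice) explicitly, which the paper merely asserts.
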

	
	\begin{proof}{Proof of Claim~\ref{claim:expander}}
		We construct such sets $A_{K,i}$ star after star.
		Let $K \in \cK$ and $W$ be the set of vertices not contained in any star and not yet used.
		Notice that at the beginning of the process $W=V'$ and that throughout the process $|W| \ge |V'| - \sum_{K \in \cK} (\ell-3) g_K/2 \ge 3n/4 - (\ell-3) (s+1) M \eps n/2 \ge n/2$, where the last inequality holds as $\eps < \frac{1}{2M(s+1)(\ell-2)}$.
		We will construct the sets $A_{K,3}, \dots, A_{K,\ell-1}$ iteratively. 
		Let $2 \le i \le \ell-1$ and assume $A_{K,3}, \dots, A_{K,i-1}$ have been constructed.
		We reveal random edges between $A_{K,i-1}$ and $W$.
		From Lemma~\ref{lem:expander} applied to $A=A_{K,i-1}$ and $B=W$, it follows that with probability at least $1-1/n^2$ the size of
		\begin{equation}
		\label{eq:neigh}
		\{ w \in W: \text{ there exists } v \in A_{K,i-1} \text{ with $vw$ being an edge of } G(n,p) \}
		\end{equation}
		is at least $|A_{K,i-1}| = g_K/2$ and thus we can choose $A_{K,i}$ to be any subset of~\eqref{eq:neigh} of size $g_K/2$.
		We then remove the $g_K/2$ vertices of $A_{K,i}$ from $W$.
		
		As there are at most $(s+1) M \eps n$ stars and for each star Lemma~\ref{lem:expander} needs to be applied $\ell-3$ times, by the union bound the probability that we succeed in all such applications is at least $1-(\ell -3)(s+1) M \eps n /n^2$.
		Thus a.a.s.~our process succeeds.
		It is clear that such collection of sets satisfies the claim.
	\end{proof}
	
	Notice that each random edge has been revealed at most once, and we have not revealed yet the random edges between $A_{K,\ell-1}$ and $A_{K,\ell}$.
	We now prove that a.a.s~there is an edge of $G(n,p)$ between $A_{K,\ell-1}$ and $A_{K,\ell}$ for at least $m$ distinct stars $K \in \cK$.

	As we have stars of different sizes, we split $\cK$ into $t=\lceil \log (M\sqrt{n}/m)/\log 2 \rceil$ subfamilies $\cK_i = \{ K \in \cK \,\colon 2^{i-1} \eps m \le g_K < 2^i \eps m \}$ for $1\le i \le t$,
	and set $k_i =|\cK_i|$. 	
	By deleting leaves, we may assume that all stars in $\cK_i$ have exactly $\lceil 2^{i-1} \eps m \rceil$ leaves.
	Denote by $\cI$ the set of indices $i \in [t]$ such that $k_i \left( 2^{i-1} \eps m \right)^2   \ge \eps^2 n m /t$.
	We prove that 
	$\sum_{i \in \cI} k_i \left( 2^{i-1} \eps m \right)^2 \ge \eps^2 nm.$
		
	Observe first that $\sum_{i \not\in \cI} k_i \left( 2^{i-1} \eps m \right)^2 \le t (\eps^2 nm/t)=\eps^2 nm$.

	It follows that
	\begin{eqnarray*}
		\sum_{i \in \cI} k_i \left( 2^{i-1} \eps m \right)^2 &=& \frac{1}{4} \sum_{i \in \cI} k_i \left( 2^i \eps m \right)^2 =
		\frac{1}{4} \sum_{i=1}^t k_i \left( 2^i \eps m \right)^2 
		-\sum_{i \not\in \cI} k_i \left( 2^{i-1} \eps m \right)^2\\
		&\ge&\frac{1}{4} \sum_{i=1}^t \sum_{K\in\cK_i} g_K^2 - \eps^2 nm\ge \frac{1}{4} s M \eps^2 nm - \eps^2 nm \ge \eps^2 nm.
	\end{eqnarray*}
	by the choice of $s$.
		
	\begin{claim}
		\label{claim:StarsTriangles}
		Let $i \in \cI$ and reveal the edges of $G(n,p)$ between $A_{K,\ell-1}$ and $A_{K,\ell}$ for each $K \in \cK_i$.
		Then with probability at least $1-1/n$, there is an edge of $G(n,p)$ between $A_{K,\ell-1}$ and $A_{K,\ell}$ for at least $k_i (2^{i-1}m)^2/ n$ distinct stars $K \in \cK_i$.
	\end{claim}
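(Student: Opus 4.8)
The plan is to prove Claim~\ref{claim:StarsTriangles} by a first–moment computation followed by a Chernoff bound, using crucially that the relevant indicator variables are \emph{mutually independent}. First I would fix notation: since all stars in $\cK_i$ have the same number $\lceil 2^{i-1}\eps m\rceil$ of leaves, the sets $A_{K,\ell-1}$ and $A_{K,\ell}$ have one common size $a$, and (discarding at most one leaf to keep $a$ an integer) $a\ge 2^{i-2}\eps m$ for $n$ large. For $K\in\cK_i$ let $Y_K$ be the indicator that $G(n,p)$ has an edge between $A_{K,\ell-1}$ and $A_{K,\ell}$, and put $X_i=\sum_{K\in\cK_i}Y_K$; the task is to show $X_i\ge k_i(2^{i-1}m)^2/n$ with probability at least $1-1/n$. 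Because the sets $A_{K,j}$ are pairwise disjoint, distinct $Y_K$ are governed by disjoint collections of potential edges of $G(n,p)$, so the $Y_K$ with $K\in\cK_i$ are independent $\mathrm{Bernoulli}(q)$ variables with $q=1-(1-p)^{a^2}$.

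Next I would bound $\mu_i:=\EE X_i=k_i q$ from below. From $1-(1-p)^{a^2}\ge 1-e^{-pa^2}\ge\tfrac12\min\{pa^2,1\}$ and $pa^2\ge \tfrac{C\eps^2}{16}\cdot\tfrac{(2^{i-1}m)^2}{n}\log n$ we get $\mu_i\ge\tfrac{k_i}{2}\min\{pa^2,1\}$. Two facts then need to be extracted. (a) \emph{$\mu_i$ is polylogarithmically large}: since $i\in\cI$ we have $k_i(2^{i-1}m)^2/n\ge m/t$, and since $\cK_i\neq\emptyset$ forces $(2^{i-1}m)^2\le M^2 n$ (the stars of $\cK_i$ satisfy $g_K\le\eps M\sqrt n$), we get $k_i\ge m/(tM^2)$; together with $m\ge\log^\ell n$, $\ell\ge3$, and $t=O(\log n)$ this yields $\mu_i\ge c\log^2 n$ for some $c>0$, in particular $\mu_i\ge 8\log n$ once $n$ is large. (b) \emph{$\tfrac12\mu_i$ dominates the target in the main regime}: if $pa^2\le 1$ then $\mu_i\ge\tfrac{C\eps^2\log n}{32}\cdot\tfrac{k_i(2^{i-1}m)^2}{n}\ge 2\,k_i(2^{i-1}m)^2/n$ for $n$ large, and if $pa^2>1$ while $(2^{i-1}m)^2\le n/4$ then $\mu_i\ge k_i/2\ge 2\,k_i(2^{i-1}m)^2/n$.

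Now I would apply the Chernoff lower tail to the independent sum $X_i$: $\PP\big[X_i<\tfrac12\mu_i\big]\le e^{-\mu_i/8}\le e^{-\log n}=1/n$. On the complementary event, (b) gives $X_i\ge\tfrac12\mu_i\ge k_i(2^{i-1}m)^2/n$, which settles the cases $pa^2\le1$ and $\{pa^2>1,\ (2^{i-1}m)^2\le n/4\}$. The only remaining regime is $pa^2>1$ together with $(2^{i-1}m)^2>n/4$; there $pa^2\ge\tfrac{C\eps^2}{64}\log n$, so $q\ge 1-n^{-C\eps^2/64}\ge 1-n^{-2}$ once $C$ is large, and a union bound over the $k_i\le n$ stars shows that with probability at least $1-1/n$ \emph{every} star of $\cK_i$ is completed, i.e.\ $X_i=k_i$ — the largest value it can take, hence at least what is needed downstream (where $X_i$ is in any case capped by $k_i$).

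The genuinely routine ingredients are the inequality $1-e^{-x}\ge\tfrac12\min\{x,1\}$ and the Chernoff estimate. The step I expect to require the most care is (a): one must certify that $\mu_i=\Omega(\log n)$ \emph{before} invoking concentration, and this is exactly where the hypothesis $i\in\cI$, the cycle length entering through $m\ge\log^\ell n$ with $\ell\ge3$, and the size bound $g_K\le\eps M\sqrt n$ from Lemma~\ref{lem:manystars} all get used; one must also keep in mind that when the nominal quantity $k_i(2^{i-1}m)^2/n$ exceeds $k_i$ the statement can only be met by completing all $k_i$ stars, which is what occurs. Summing the claim over $i\in\cI$ and recalling $\sum_{i\in\cI}k_i(2^{i-1}m)^2\ge nm$ then yields the desired $m$ pairwise vertex-disjoint copies of $C_\ell$.
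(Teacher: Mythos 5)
Your proposal has the same skeleton as the paper's proof: the completion events of distinct stars in $\cK_i$ are governed by disjoint sets of potential random edges, hence are independent Bernoulli variables with success probability $1-(1-p)^{a^2}$ (the paper writes this as $1-(1-p)^{g^2/4}$ with $g=\lceil 2^{i-1}\eps m\rceil$), and one finishes with a lower bound on the expectation plus the Chernoff lower tail, checking via $i\in\cI$, $m\ge\log^\ell n$ and $t=O(\log n)$ that the mean is $\Omega(\log n)$. Where you genuinely differ is in how you reach the stated count $k_i(2^{i-1}m)^2/n$: you exploit the factor $C\log n$ in $p$ (with $C$ large in terms of $\eps$, which is legitimate since $\eps$ is fixed before $C$) to push the mean to at least twice the stated target when $pa^2\le 1$, and you treat $pa^2>1$ separately. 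The paper's own proof only establishes $\EE[X]\ge 2k_ig^2/n$, i.e.\ it delivers about $k_i(2^{i-1}\eps m)^2/n$ completed stars, an $\eps^2$ factor below the bound in the claim's statement (which is the bound later summed over $i\in\cI$ to reach $m$); your use of the $\log n$ slack is exactly what is needed to close that discrepancy, so in the two main regimes your argument is tighter than, and preferable to, the written one.

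The one place where your write-up does not prove the claim as stated is the regime $pa^2>1$ with $(2^{i-1}m)^2>n$, which is non-vacuous since nonempty bins only satisfy $2^{i-1}\eps m\le g_K\le \eps M\sqrt n$ with $M=16\ell>1$. There the nominal target $k_i(2^{i-1}m)^2/n$ exceeds $k_i$, so no event can realize it; you correctly prove the best possible statement (all $k_i$ stars are completed w.h.p.), but your closing assertion that this is ``at least what is needed downstream'' is not justified: the downstream computation sums the uncapped quantities $k_i(2^{i-1}m)^2/n$ over $i\in\cI$ to reach $m$, and if the bins carrying most of that sum consist of stars with more than about $\sqrt n$ leaves, replacing their contribution by $k_i$ can drop the total below $m$ (indeed, after the trimming $\sum_K g_K^2\le (s+1)M\eps^2 nm$, a family in which all stars have close to $\eps M\sqrt n$ leaves has only $O(m/M)$ stars in total). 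So this corner case cannot be absorbed inside the claim; it shows the claim (and its subsequent use) needs a reformulation, e.g.\ asserting $\min\{k_i,\,k_i(2^{i-1}m)^2/n\}$ completed stars together with a rebalancing of the constants — note that the paper's own proof does not establish the stated constant either, so this is an issue you have inherited from the formulation rather than created. Apart from flagging that explicitly instead of asserting downstream sufficiency, your probabilistic steps (independence, $1-e^{-x}\ge\tfrac12\min\{x,1\}$, the lower-tail Chernoff bound, and the verification that $\mu_i\ge 8\log n$ using $i\in\cI$, $k_i\ge m/(tM^2)$ and $m\ge\log^\ell n$) are correct.
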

	
	Having this claim and since $|\cI| \le t =o (n)$, with a union bound over $i \in \cI$, there are a.a.s.~at least
	$\sum_{i \in \cI} k_i (2^{i-1}m)^2/ n = \frac{1}{\eps ^2 n} \sum_{i \in \cI} k_i \left( 2^{i-1} \eps m \right)^2 \ge \frac{\eps^2 nm}{\eps^2 n} \ge m$
	distinct stars $K \in \cK$ such that there is an edge between $A_{K,\ell-1}$ and $A_{K,\ell}$.
		
	For each such star $K$, we get a cycle $C_\ell$ in $G \cup G(n,p)$ in the following way: we start from the random edge between $A_{K,\ell-1}$ and $A_{K,\ell}$ given by Claim~\ref{claim:StarsTriangles}, work backwards using Claim~\ref{claim:expander} until $A_{K,2}$, and finally use the centre vertex of the star $K$ and close the cycle.
	Thus we get at least $m$ cycles and they are pairwise vertex-disjoint, as wanted.
	\end{proof}
	
	It remains to prove Claim~\ref{claim:StarsTriangles}.
	
	\begin{proof}{Proof of Claim~\ref{claim:StarsTriangles}}
		Fix $i \in \cI$ and let $k=k_i$ and $g = \lceil 2^{i-1} \eps m \rceil$.
		For $1 \le j \le k$, let $X_j$ be the indicator variable of the event that for the $j$-th star $K \in \cK_i$, there is an edge of $G(n,p)$ between $A_{K,\ell-1}$ and $A_{K,\ell}$, and set $X = \sum_{j=1}^k X_i$.
		Then $\PP[X_j=1]=1-(1-p)^{g^2/4}$ and $\EE[X] = k \left( 1-(1-p)^{g^2/4} \right)$.
		We have that $\EE[X] \ge 2 k g^2/n$.
		Indeed $	k \left( 1-(1-p)^{g^2/4} \right)  \ge 2 k g^2/n$ is equivalent to $1-2g^2/n \ge \left( 1- \frac{C \log n}{n}\right)^{g^2/4}$,
		and the later holds for large enough $C$ and $n$ using the inequality $1-x\le e^{-x}\le 1-\frac{x}{2}$ valid for $x<3/2$.
		
		From Chernoff's inequality and from the fact that $k g^2/n \ge \eps^2 m/t$ by the definition of $\cI$, it follows that with probability at most
		$$\exp \left( -\frac{1}{4} \frac{k g^2}{n} \right) \le \exp\left( - \frac{1}{4} \frac{\eps^2 m}{t} \right) \le \frac{1}{n}$$
		there are less than $kg^2/n$ stars $K \in \cK_i$ for which there are no edges of $G(n,p)$ between $A_{K,\ell-1}$ and $A_{K,\ell}$, where the last inequality holds as $t \le \log n$ and $m \ge \log^\ell n$.
	\end{proof}	 

We now turn to the proof of Proposition~\ref{prop:gesqrt}.

\begin{proof}{Proof of Proposition~\ref{prop:gesqrt}}
	Let $\ell \ge 3$ be an integer and $M=M(\ell)=16 \ell$.
	Let $C>0$ and $n$ be sufficiently large for the following arguments.
	Let $M \sqrt{n} \le m \le  \frac{n}{16 \lceil \ell/2 \rceil}$ and $G$ be an $n$-vertex graph $G$ with maximum degree $\Delta(G) <\frac{n}{32 \lfloor \ell/2 \rfloor}$ and minimum degree $\delta(G) \ge m$.
	We distinguish according to the parity of $\ell$.
	
	When $\ell$ is even, the proposition follows from the bound on the Turan's number of even cycles proved by Bondy and and Simonovits in \cite{BS_TuranEvenCycles}: they showed that for each even $\ell \ge 4$ there exists a constant $K=K(\ell)$ such that if $n$ is large enough and $H$ is a graph on $n$ vertices with more than $K n^{1+\frac{1}{\ell/2}}$ edges, then $H$ contains a $C_\ell$.
	Moreover when $\ell=4$, it follows from \cite{turan_c4} that $K(4)=3/4$ suffices.
	We find at least $m$ cycles $C_\ell$'s directly in $G$ without using any random edge, by repeatedly applying the cited results. 
	Indeed suppose we have been able to find a collection $\cC$ of $i<m$ disjoint $C_\ell$'s and let $G'=G[V \setminus V(\cC)]$.
	Then using that $|V(\cC)|=i \ell < m \ell$, we have $e(G') \ge \frac{n m}{2} - i \ell \Delta(G) \ge \frac{n m}{3} \ge \frac{Mn^{1+\frac{1}{2}}}{3} \ge K  v(G')^{1+\frac{1}{\ell/2}}$, thus $G'$ contains a cycle $C_\ell$.
	Notice that the last inequality is easily true for $\ell > 4$ and it is true when $\ell=4$ as $M = 64$ and $K \le 3/4$.
	
	Now we turn to odd cycles, and we closely follow our approach for triangles, with the differences outlined in Section~\ref{sec:sublinear}.
	We can greedily obtain a spanning bipartite subgraph $G' \subseteq G$ of minimum degree $\delta(G') \ge m/2$ by taking a partition of $V(G)$ into sets $A$ and $B$ such that $e_G(A,B)$ is maximised and letting $G'=G[A,B]$.
	Indeed, a vertex of degree less than $m/2$ can be moved to the other class to increase $e_G(A,B)$.
	W.l.o.g.~we assume $|B|\ge n/2\ge |A|$.
	Moreover, we have $|A| \ge 8 m \lfloor \ell/2 \rfloor$, as otherwise, with $e(A,B) \ge nm/4$, there is a vertex of degree at least $\frac{n}{32 \lfloor \ell/2 \rfloor}$, a contradiction.
	
	\begin{claim}	\label{claim:A'B'}
		For every $A' \subseteq A$, $B' \subseteq B$ with $|A'| \le 2m \lfloor \ell/2 \rfloor$ and $|B'| \ge n/4$, we have $e(A \setminus A',B') \ge n m/16$.
	\end{claim}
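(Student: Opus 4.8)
The plan is a short double-counting argument carried out inside the bipartite graph $G'=G[A,B]$: use the minimum-degree bound on $G'$ to lower bound the edges meeting $B'$, and use the maximum-degree bound on $G$ to upper bound the edges meeting $A'$, then subtract.

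First I would count from the $B'$ side. Every vertex of $B'\subseteq B$ has all of its neighbours in $A$, and at least $\delta(G')\ge m/2$ of them, so
\[
e_{G'}(A,B')=\sum_{v\in B'}\deg_{G'}(v)\ \ge\ |B'|\cdot \tfrac m2\ \ge\ \tfrac n4\cdot\tfrac m2\ =\ \tfrac{nm}{8}.
\]
Next I would discard the edges that also touch $A'$. Since $\Delta(G)<\tfrac{n}{32\lfloor\ell/2\rfloor}$, the total number of edges of $G$ incident to $A'$ is at most $|A'|\,\Delta(G)<2m\lfloor\ell/2\rfloor\cdot\tfrac{n}{32\lfloor\ell/2\rfloor}=\tfrac{nm}{16}$, and in particular $e_{G'}(A',B')\le \tfrac{nm}{16}$.

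Subtracting, and using that $A'$ and $A\setminus A'$ partition $A$ while $G'$ is bipartite, we get
\[
e(A\setminus A',B')\ \ge\ e_{G'}(A\setminus A',B')\ =\ e_{G'}(A,B')-e_{G'}(A',B')\ \ge\ \tfrac{nm}{8}-\tfrac{nm}{16}\ =\ \tfrac{nm}{16},
\]
which is the claim (the first inequality is just $G'\subseteq G$, so the bound holds for $e_G$ as well). There is essentially no obstacle here; the only point worth flagging is that the constant in the hypothesis $\Delta(G)<\tfrac{n}{32\lfloor\ell/2\rfloor}$ is chosen precisely so that the $\lfloor\ell/2\rfloor$ factors cancel and the edges through $A'$ consume at most half of $e_{G'}(A,B')$, leaving the required $nm/16$.
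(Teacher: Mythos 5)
Your argument is correct and is essentially the paper's proof: both rest on $e(A,B')\ge |B'|\,m/2\ge nm/8$ from $\delta(G')\ge m/2$, and on the fact that $|A'|\,\Delta(G) < 2m\lfloor \ell/2\rfloor\cdot\tfrac{n}{32\lfloor \ell/2\rfloor}=nm/16$ bounds the edges through $A'$; the paper merely phrases the second step as a contradiction via averaging over $A'$ instead of your direct subtraction.
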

	\begin{proof}{Proof of Claim~\ref{claim:A'B'}}
		If $e(A \setminus A',B') < n m/16$, it follows from $e(A,B') \ge |B'| m /2\ge nm/8$ that we have $e(A',B') \ge n m /16$. Since $|A'| \le 2m \lfloor \ell/2 \rfloor$, there must be a vertex of degree at least $\frac{n}{32 \lfloor \ell/2 \rfloor}$ in $A'$, a contradiction.
	\end{proof}
	
	From this claim it follows that there are many vertices of high degree in $A \setminus A'$ and some of them have pairwise common neighbours in $B'$.
	\begin{claim}\label{claim:largeA*}
		Suppose that $A' \subseteq A$, $B' \subseteq B$ with $|A'| \le 2m \lfloor \ell/2 \rfloor$, $|B'| \ge n/4$. 
		Let $ A^* = \{ v \in A \setminus A' \,\colon\, \deg (v,B') \ge m/16 \}$. Then $|A^*| \ge \lfloor \ell/2 \rfloor m.$
		Moreover, there exist $\lfloor \ell/2 \rfloor$ distinct vertices $v_2, v_4, \dots, v_{\ell-1} \in A^*$ such that $v_i$ and $v_{i+2}$ have at least $\lfloor \ell/2 \rfloor -1$ common neighbours in $B'$ for each $i=2,4,\dots,\ell-3$.
	\end{claim}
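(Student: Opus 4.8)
The plan is to deduce both parts of the claim from the edge count in Claim~\ref{claim:A'B'}. For the size of $A^*$: Claim~\ref{claim:A'B'} applies directly (we have $|A'|\le 2m\lfloor\ell/2\rfloor$ and $|B'|\ge n/4$), so $e(A\setminus A',B')\ge nm/16$. By definition every vertex of $(A\setminus A')\setminus A^*$ has fewer than $m/16$ neighbours in $B'$, and there are at most $|A|\le n/2$ of them, so these carry fewer than $nm/32$ of those edges; hence $e(A^*,B')> nm/32$. Since also $e(A^*,B')\le |A^*|\,\Delta(G)< |A^*|\cdot n/(32\lfloor\ell/2\rfloor)$, this already forces $|A^*|>\lfloor\ell/2\rfloor m$.

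For the path, write $k=\lfloor\ell/2\rfloor$; the case $\ell=3$ (so $k=1$) carries no codegree condition and is settled by the bound $|A^*|\ge m\ge 1$ just proved, so assume $\ell\ge 5$. I would introduce the auxiliary \emph{codegree graph} $H$ on vertex set $A^*$, joining $u$ and $v$ exactly when they have at least $k-1$ common neighbours in $B'$, and look for a path on $k$ vertices in $H$: reading its vertices along the path gives the required $v_2,v_4,\dots,v_{\ell-1}$, with every consecutive pair $v_i,v_{i+2}$ sharing $\ge k-1$ neighbours in $B'$ by construction. It suffices for $H$ to contain a subgraph of minimum degree at least $k-1$ (in a longest path of such a subgraph, the endpoint's $\ge k-1$ neighbours all lie on the path, so it has $\ge k$ vertices), and such a subgraph exists whenever $H$ is not $(k-2)$-degenerate, i.e.\ whenever $e(H)>(k-2)\,|A^*|$. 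As $|A^*|\le|A|\le n/2$, the whole claim thus reduces to showing $e(H)=\Omega_\ell(n)$.

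To get this I would double count codegrees:
\[
\sum_{\{u,v\}\subseteq A^*}\bigl|N(u)\cap N(v)\cap B'\bigr|=\sum_{w\in B'}\binom{\deg(w,A^*)}{2}\ \ge\ \frac{e(A^*,B')^2}{2|B'|}-\frac{e(A^*,B')}{2}
\]
by convexity. Plugging in $|B'|\le n$ and $e(A^*,B')>nm/32$ from the first part, the right-hand side is at least a constant times $nm^2$. On the left, pairs that are non-edges of $H$ contribute at most $(k-2)\binom{|A^*|}{2}=O_\ell(n^2)$, which is negligible against $nm^2$ precisely because $m\ge M\sqrt n$ with $M=16\ell$ gives $m^2\ge 256\ell^2 n\gg n$, while each edge of $H$ contributes at most $\Delta(G)<n/(32k)$. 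Rearranging yields $e(H)=\Omega_\ell(m^2)>(k-2)|A^*|$, which is exactly what the degeneracy argument needs.

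The step I expect to be the real obstacle is this lower bound on $e(H)$ in the worst case $m\approx\sqrt n$: there each vertex of $A^*$ sees only a $\Theta(n^{-1/2})$-fraction of $B'$ and $H$ can genuinely be sparse, so a dependent-random-choice argument aiming at a single common-neighbour set good for all pairs at once, or a direct Erd\H{o}s--Gallai count, both break down. What makes the argument go through is to feed the \emph{global} bound $e(A^*,B')\gtrsim nm$ coming from Claim~\ref{claim:A'B'} (rather than the weaker per-vertex bound $e(A^*,B')\ge |A^*|\cdot m/16$) into the convexity step, and to ask only for $e(H)$ to beat a linear-in-$n$ threshold, extracting the path via the degeneracy/longest-path argument instead of hunting for explicit high-codegree configurations.
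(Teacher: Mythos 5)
Your proof is correct, and while your first half coincides with the paper's own counting (bounding $e(A^*,B')$ from below via Claim~\ref{claim:A'B'} and the definition of $A^*$, and from above via $\Delta(G)$), your second half takes a genuinely different route. The paper first shrinks $A^*$ to an arbitrary subset of size $m/(8\ell)$ and then applies dependent random choice: for a uniformly random $v\in B'$ it compares $X=|N(v,A^*)|$ with the number $Y$ of pairs in $N(v,A^*)$ having fewer than $\ell/2$ common neighbours in $B'$, and the hypothesis $m\ge 16\ell\sqrt n$ gives $\EE[X-Y]\ge \ell/2$, producing a set of at least $\ell/2$ vertices in which \emph{every} pair has at least $\ell/2$ common neighbours; any ordering of these then serves as $v_2,v_4,\dots,v_{\ell-1}$. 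You instead double count codegrees over all of $A^*$ by convexity, split the count between low-codegree pairs (at most $\lfloor\ell/2\rfloor-2$ each) and edges of the auxiliary codegree graph (at most $\Delta(G)<n/(32\lfloor\ell/2\rfloor)$ each), and extract a $\lfloor\ell/2\rfloor$-vertex path through the standard degeneracy/longest-path argument; this is deterministic, asks only for large codegree along consecutive pairs (which is all the claim needs), and its only minor unstated step is the monotonicity check justifying the substitution of the lower bound $e(A^*,B')>nm/32$ into $\frac{e^2}{2|B'|}-\frac e2$, which is immediate since $e(A^*,B')\ge|B'|$. Both arguments run on the same fuel, namely $m^2/n\ge 256\ell^2$ from $M=16\ell$. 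One correction to your closing commentary: the dependent-random-choice approach aiming at a single set good for all pairs does \emph{not} break down at $m\approx\sqrt n$ --- trimming $A^*$ to size $m/(8\ell)$ before applying it keeps the expected number of bad pairs below the expected degree, and this is exactly how the paper proceeds.
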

	
	Before the proof, we recall some basic and common notation.
	Given $X, Y \subseteq V(G)$, with $N(X,Y)$
	we denote the common neighborhood of $X$ in $Y$, i.e. the set of all
	vertices of $Y$ that are adjacent to every vertex in $X$. Moreover when
	$v$ is a single vertex, $N(v,Y)=N(\{v\},Y)$.
	
	\begin{proof}{Proof of Claim~\ref{claim:largeA*}}
		We have $|A^*|\frac{n}{32 \lfloor \ell/2 \rfloor} + |A| m/16 \ge e(A^*,B') +e(A \setminus (A'\cup A^*),B') = e(A \setminus A',B') \ge \frac{nm}{16}$, where the last inequality uses Claim~\ref{claim:A'B'}.
		Since $|A| \le n/2$, we get
		$ |A^*| \ge \frac{nm/16-nm/32}{n/(32 \lfloor \ell/2 \rfloor)} =\lfloor \ell/2 \rfloor m.$
		Now we prove the second part of the claim using the dependent random choice technique (see the survey \cite{dependentRC}).
		Select any subset of $A^*$ of size $m/(8 \ell)$ and, by abusing the notation, call it $A^*$ again.
		Take a vertex $v \in B'$ uniformly at random and let $X=|N(v,A^*)|$.
		Then $\EE[X] = \sum_{w \in A^*} \PP[w \in N(v,A^*)] = \sum_{w \in A^*} \frac{N(w,B')}{|B'|} \ge \frac{|A^*|}{|B'|} \frac{m}{16}$.
		Let Y denote the random variable counting the number of subsets of $N(v,A^*)$ of size $2$ with fewer than $\ell/2$ common neighbours in $B'$.
		For a given subset $S$ of $A^*$ of size $2$, the probability that $S$ is a subset of $N(v,A^*)$ is $\frac{|N(S,B')|}{|B'|}$.
		As there are at most $\binom{|A^*|}{2}$ choice of $S$ for which $|N(S,B')| < \ell/2$, we have
		$\EE[Y] < \binom{|A^*|}{2} \frac{\ell/2}{|B'|}$.
		In particular $\EE[X-Y] \ge \frac{|A^*|}{4 |B'|}(\frac{m}{4}-|A^*| \ell) \ge \frac{m^2}{64 \cdot 4 \ell |B'|} \ge \ell/2$,
		where we used that $|A^*|=m/(8 \ell)$, $|B'| \le n$ and $m \ge 16 \ell \sqrt{n}$.
		Therefore there is a choice of $v$ for which $X-Y \ge \ell/2$.
		Consider such $v$ and delete one vertex from each subset $S$ of $N(v,A^*)$  of size $2$ with fewer than $\ell/2$ common neighbors. 
		Let $U$ be the remaining subset of $A^*$. 
		The set $U$ has at least $X - Y \ge \ell/2$ vertices and all its subsets of size $2$ have at least $\ell/2$ common neighbours.
		This proves the claim.
	\end{proof}
	
	In the given hypotheses, Claim~\ref{claim:largeA*} gives $\lfloor \ell/2 \rfloor$ vertices $v_2, v_4, \dots, v_{\ell-1} \in A^*$ that can be completed to a path $v_2v_3v_4\dots v_{\ell-1}$ by greedily choosing $\lfloor \ell/2 \rfloor -1$ vertices $v_3, v_5, \dots, v_{\ell-2}$ in $B'$. 
	Let $s=\lceil 2n/m \rceil$ and $t= \lceil \frac{m^2}{2n} \rceil$.
	We will now iteratively construct $m$ cycles $C_\ell$'s in $t$ rounds of $s$ cycles each.
	In each round we will reveal $G(n,q)$ with $q = \frac{C \log n}{m^{2}}$.
	For the start we set $A'=B_0=\emptyset$.
	
	Let $i=1,\dots,t$, suppose that before the $i$-th round we have $|A'| = (i-1)s \lfloor \ell/2 \rfloor < 2m \lfloor \ell/2 \rfloor$ and $|B_0|=(i-1)s \lceil \ell/2 \rceil < 2m \lceil \ell/2 \rceil$, and note this is true for $i=1$.
	For $j=1,\dots,s$ we pick a subset of $\ell-2$ distinct vertices $V_j=\{v_{j,2}, v_{j,3}, \dots, v_{j,\ell-1} \}$ with $v_{j,k} \in A \setminus A'$ for $k=2,4,\dots,\ell-1$ and $v_{j,k} \in B \setminus B_0$ for $k=3,5,\dots,\ell-2$, and two pairwise disjoint subsets $B_{j,2},B_{j,\ell-1} \subseteq B \setminus B_0$, disjoint also with $V_j$, such that the following is true: $v_{j,2}v_{j,3}\dots v_{j,\ell-1}$ is a path and $B_{j,2},B_{j,\ell-1} \subseteq B'$ are sets of $\lceil m/16 \rceil$ neighbours of $v_2$ and $v_{\ell-1}$, respectively.
	We can do that greedly for each $j=1,\dots,s$, by applying Claim~\ref{claim:largeA*} with $A' \cup_{t=1}^{j-1} (V_t \cap A)$ and $B \setminus \left( B_0 \cup_{t=1}^{j-1} \left( V_t \cup B_{t,2} \cup B_{t,\ell-1} \right) \right)$: note such application is possible as the size of the first set is at most $2m \lfloor \ell/2 \rfloor$ and the size of the second one is at least $n/4$ as $m \le \frac{n}{16 \lceil \ell/2 \rceil}$.
	
	Now we reveal random edges with probability $q$.
	By Chernoff's inequality  and the union bound we get that with probability at least $1-1/n^2$ we have at least an edge between $B_{j,2}$ and $B_{j,\ell-1}$ for each $j=1,\dots,s$.
	Such edges will complete each of the $s$ paths $v_{j,2} v_{j,3} \dots v_{j,\ell-1}$ to a cycle on $\ell$ vertices.
	We add the vertices we use from $A$ to $A'$ and those used from $B$ to $B_0$.
	Notice that $|A'|=is\lfloor \ell/2 \rfloor$ and $|B'|=i s \lceil \ell/2 \rceil$, as required at the beginning of the next round.
	We can repeat the above $t$ times because with $m \ge M \sqrt{n}$ we have $t q \le \frac{C \log n}{n} = p$.
	By a union bound over the $t=\lceil \frac{m^2}{2n} \rceil = o(n)$ rounds, we get that we succeed a.a.s.~and find $t s \ge m$ cycles $C_\ell$.
\end{proof}

\bibliographystyle{abbrv}
\bibliography{references}

\end{document}